\begin{document}

\newcommand{\F}{\mathcal{F}}
\newcommand{\Sc}{\mathcal{S}}
\newcommand{\R}{\mathbb R}
\newcommand{\T}{\mathbb T}
\newcommand{\N}{\mathbb N}
\newcommand{\Z}{\mathbb Z}
\newcommand{\C}{\mathbb C}  
\newcommand{\h}[2]{\mbox{$ \widehat{H}^{#1}_{#2}(\R)$}}
\newcommand{\hh}[3]{\mbox{$ \widehat{H}^{#1}_{#2, #3}$}} 
\newcommand{\n}[2]{\mbox{$ \| #1\| _{ #2} $}} 
\newcommand{\x}{\mbox{$X^r_{s,b}$}} 
\newcommand{\xx}{\mbox{$X_{s,b}$}}
\newcommand{\X}[3]{\mbox{$X^{#1}_{#2,#3}$}} 
\newcommand{\XX}[2]{\mbox{$X_{#1,#2}$}}
\newcommand{\q}[2]{\mbox{$ {\| #1 \|}^2_{#2} $}}
\newcommand{\e}{\varepsilon}
\newcommand{\om}{\omega}
\newcommand{\lb}{\langle}
\newcommand{\rb}{\rangle}
\newcommand{\ls}{\lesssim}
\newcommand{\gs}{\gtrsim}
\newcommand{\pd}{\partial}
\newtheorem{lemma}{Lemma} 
\newtheorem{kor}{Corollary} 
\newtheorem{theorem}{Theorem}
\newtheorem{prop}{Proposition}

\title[gZK at critical regularity]{On the generalized Zakharov-Kuznetsov equation at critical regularity}

\author[Axel~Gr{\"u}nrock]{Axel~Gr{\"u}nrock}

\address{Axel~Gr{\"u}nrock: Heinrich-Heine-Universit\"at D\"usseldorf,
Mathematisches Institut, Universit\"atsstrasse 1, 40225 D\"usseldorf, Germany.}
\email{gruenroc@math.uni-duesseldorf.de}

\subjclass[2010]{Primary: 35Q53. Secondary: 37K40}

\begin{abstract} The Cauchy problem for the generalized Zakharov-Kuznetsov equation
$$\partial_t u +\partial_x\Delta u=\partial_x u^{k+1}, \qquad \qquad u(0)=u_0$$
\hfill\\
\noindent is considered in space dimensions $n=2$ and $n=3$ for integer exponents $k \ge 3$. For data $u_0 \in \dot{B}^{s_c}_{2,q}$,
where $1\le q \le \infty$ and $s_c=\frac{n}{2}- \frac{2}{k}$ is the critical Sobolev regularity, it is shown, that this problem
is locally well-posed and globally well-posed, if the data are sufficiently small. The proof follows ideas of Kenig, Ponce, and
Vega \cite{KPV93} and uses estimates for the corresponding linear equation, such as local smoothing effect, Strichartz estimates,
and maximal function inequalities. These are inserted into the framework of the function spaces $U^p$ and $V^p$ introduced by
Koch and Tataru \cite{KT05}, \cite{KT07}.
\end{abstract}

\keywords{generalized Zakharov-Kuznetsov equation -- local and global well-posedness -- critical regularity}

\maketitle
\tableofcontents

\section{Introduction}

The Zakharov-Kuznetsov equation (ZK)
\begin{equation}\label{ZK}
 \partial_t u +\partial_x\Delta u=\partial_x u^{2}
\end{equation}
with $\Delta= \partial_x^2 + \sum_{i=1}^{n-1}\partial^2_{y_i}$, $(x,y)\in \R\times\R^{n-1}$, is a generalization of the famous
Korteweg-de Vries equation (KdV) to arbitrary higher dimensions. In 1974, Zakharov and Kuznetsov derived \eqref{ZK} as a model
describing the unidirectional wave propagation in a magnetized plasma in three space dimensions \cite[equation (6)]{ZK74}. For
two dimensions, a derivation of \eqref{ZK} from the basic hydrodynamic equations is due to Laedke and Spatschek \cite[Appendix B]{LaSp}.
We also refer to the paper \cite{LLS} by Lannes, Linares, and Saut for a rigorous justification of ZK valid for $n \in \{2,3\}$.
Both, the Cauchy problem as well as several initial boundary value problems connected with \eqref{ZK} have attracted considerable
interest in recent years, we mention \cite{BL}, \cite{BJM}, \cite{F95}, \cite{GH}, \cite{LS}, \cite{MP15}, \cite{RV12}; \cite{F08}, \cite{LaTr},
\cite{LPS}, \cite{STW}. This list is by no means exhaustive. Similar as for KdV and - to the author's knowledge -
beginning with the work \cite{BL} of Biagioni and Linares on the modified equation, generalizations of ZK with higher power nonlinearities
\begin{equation}\label{gZK}
 \partial_t u +\partial_x\Delta u=\partial_x u^{k+1} \qquad \qquad \mbox{with} \qquad \qquad u(0)=u_0
\end{equation}
are considered, too. We call \eqref{gZK} the k-th generalized ZK equation, for short gZK-k. In this paper we are concerned with local
and small data global well-posedness of the Cauchy problem for gZK-k in two and three space dimensions for integers $k\ge3$. (Unfortunately
our arguments break down for the modified equation, i. e. for $k=2$.) In the $2$ $D$ - case the following results are known for data in
the classical Soboles spaces $H^s$.
\begin{itemize}
 \item In 2011 Linares and Pastor \cite{LP11} showed that the Cauchy problem for gZK-k is locally well-posed in $H^s$, if $k\ge2$
and $s>\max(\frac34,1-\frac{3}{2k-4})$. If the data are sufficiently small in $H^1$, then the corresponding solutions extend globally in time.
 \item For $k>8$ the lower bound on $s$ was pushed down to $s>1-\frac{2}{k}$ by Farah, Linares, and Pastor \cite{FLP} in 2012. Since
$s_c=1-\frac{2}{k}$ is the critical regularity by scaling considerations, this result covers the whole subcritical range.
 \item Further progress on the local problem was reached by Ribaud and Vento \cite{RV12a} in 2012. Their results almost reached $s_c$
for all $k\ge4$, while for the quartic nonlinearity they assume $s>\frac{5}{12}$.
\end{itemize}
Further results on gZK-k in two dimensions with data in weighted spaces were recently obtained by Fonseca and Pachon \cite{FP}. The author
is not aware of any comparable results for $k\ge3$ in the three dimensional case, where the critical regularity is $s_c=\frac32-\frac{2}{k}$.
More generally we have
$$s_c=s_c(n,k)=\frac{n}{2}-\frac{2}{k}.$$
Roughly speaking, the method of proof is the same in all three papers \cite{LP11}, \cite{FLP}, and \cite{RV12a}. The authors adapt
the strategy developped by Kenig, Ponce, and Vega in \cite{KPV93} in the KdV-context and apply a combination of local smoothing
estimate, Strichartz inequality, and maximal function estimate in a contraction mapping argument. Here we shall pick up these ideas,
push them down to the critical regularity and extend the arguments to the three dimensional case. Following Molinet-Ribaud \cite{MR03}
and especially in our method of proof Koch-Marzuola \cite{KoMa} in their works on gKdV, we consider data in the homogeneous Besov spaces
$$\dot{B}^s_{2,q}=\{u_0 \in {\mathcal{Z}}': \|u_0\|_{\dot{B}^s_{2,q}}\},$$
where ${\mathcal{Z}}'$ is the dual space of
$$\mathcal{Z} = \{ f \in \Sc : (D^{\alpha}\F f) (0)=0 \mbox{ for every multi-index } \alpha \}.$$
Here and below $\F$ denotes the Fourier transform. For $q<\infty$ the Besov-norm is in general given by 
$$\|u_0\|_{\dot{B}^s_{p,q}}=\Big(\sum_{N\in 2^{\Z}} \|P_N u_0\|_{L^p}\Big)^{\frac{1}{q}},$$
$P_N=\F^{-1}\chi_{\{|\xi|\sim N\}}\F$ are the Littlewood-Paley projections. A case of special interest is $q=2$, where
$\dot{B}^s_{2,2}=\dot{H}^s$, the homogeneous Sobolev (or Riesz-potential) space. For $q=\infty$ one has the usual modification
$\|u_0\|_{\dot{B}^s_{p,\infty}}=\sup_{N\in 2^{\Z}} \|P_N u_0\|_{L^p}$, and in this case (with $p=2$) we will in addition assume for our
data, that
$$\lim_{N\to\infty}\|P_N u_0\|_{L^2}=\lim_{N\to0}\|P_N u_0\|_{L^2}=0.$$
With $\dot{B}^{s,o}_{2,\infty}$ we will denote the closed subspace of all $u_0 \in \dot{B}^s_{2,\infty}$, for which these limits vanish.
Then $\mathcal{Z}$ is dense in $\dot{B}^{s,o}_{2,\infty}$. Without this additional assumption, several of our arguments break down, e. g.
we loose the persistence property of the solution. Observe for $1\le \tilde{q}\le 2 \le q < \infty$ the inclusions
$$\dot{B}^{s}_{2,1}\subset \dot{B}^{s}_{2,\tilde{q}} \subset \dot{H}^s \subset \dot{B}^{s}_{2,q} \subset\dot{B}^{s,o}_{2,\infty} \subset \dot{B}^s_{2,\infty},$$
so for fixed $s_c$, on the fine scale of the $q$'s, the $\dot{B}^{s_c,o}_{2,\infty}$ is the largest data space we cope with. After
these preparations we can state our

\quad

{\bf{Main result:}} \emph{Let $n\in\{2,3\}$ and $k\ge 3$ an integer. Then the Cauchy problem \eqref{gZK} is locally well-posed for data in
$\dot{B}^{s_c}_{2,q}$, if $q<\infty$, and in $\dot{B}^{s_c,o}_{2,\infty}$. Moreover, we have global well-posedness for small data in these
spaces.} 

\quad

A more precise statement will follow at the end of Section 2. We remark already, that no smallness assumption is needed for the
local part, but that - as usual in a critical case - the lifespan of the solutions cannot be controlled by the size of the data in their
natural norm. To obtain the result, two main difficulties have to be overcome. The first is to prove a sharp global maximal function
estimate or to find a substitute for this. In $2$ $D$ we can solve this problem by symmetrizing the equation, see Section 3.1, especially
Proposition \ref{max} below, while in $3$ $D$ a surprisingly soft argument allows us to circumvent this obstacle, see Section 5.1. The
second problem is the missing generalized Leibniz rule in higher dimensional mixed Lebesgue spaces of type $L_x^pL_t^q$. This is solved
by using the spaces $U^p(L_x^2)$ and $V^p(L_x^2)$ of $L^2_x$-valued functions of the time variable, which were introduced by Koch and
Tataru in \cite{KT05}, \cite{KT07}, see also the exposition by Hadac, Herr, and Koch in \cite{HHK} and Koch's lecture \cite{Koch}. Since
the norms of these spaces depend on the size of the spatial Fourier transform, the "distribution" of derivatives on various factors can
easily be handled. Some basics about these spaces, as far as needed here, are gathered in Section 2.

\quad

In proving the result, we can restrict ourselves to apply \emph{linear} estimates for free solutions - no bilinear refinement of a Strichartz type
inequality is used. For $k=3$ this is astonishing, if we compare our results here with the theory for gKdV. Using linear estimates only,
Kenig, Ponce, and Vega obtained well-posedness for gKdV-3 in $H^s(\R)$ for $s\ge \frac{1}{12}$. To push this down to the critical regularity,
a bilinear estimate for free solutions is needed, see the result in \cite{G05} by the author, which was later on improved by Tao \cite{TT}
to the endpoint and by Koch-Marzuola \cite{KoMa} to critical Besov spaces. As our calculations show, linear estimates are sufficient in
higher dimensions. Furthermore we remark that for the quartic nonlinearity in $2$ $D$ our result closes a gap of $\frac{1}{12}$ derivatives
between the existing LWP theory and the scaling heuristic.

\quad

{\bf{Acknowledgement:}} The author is indepted to Herbert Koch and Sebastian Herr for numerous explanations about the function spaces
$U^p$ and $V^p$.

\newpage 

\section{Function spaces and precise statement of results}

Here we collect the necessary facts about the function spaces $U^p$ and $V^p$, respectively $U_{\varphi}^p$ and $V^p_{\varphi}$. For proofs
and detailed descriptions we refer to the works \cite{HHK} and \cite{Koch}. We begin with the functions of bounded $p$-variation,
which were (in the real valued case) introduced by Wiener in \cite{Wiener}. Let $I \subset \R$ be an interval and $\mathcal{P}_I$
denote the system of all finite partitions $P=\{t_0 < \dots < t_K\} \subset I$ of $I$. Here $t_K=\infty$ is admitted, if $I$ is
unbounded to the right. For a function $v:I\to L^2_x$ the $p$-variation $\omega_p(I,v)$ is defined by
$$\omega_p(I,v):=\sup_{P\in\mathcal{P}_I} \Big( \sum_{k=1}^K\|v(t_k)-v(t_{k-1})\|_{L^2_x}\Big)^{\frac{1}{p}},$$
which, for $1\le p<\infty$, is a seminorm. Setting
$$\|v\|_{V^p}:=\max(\|v\|_{L^{\infty}_I(L^2_x)}, \omega_p(I,v))$$
we get a norm on the linear space
$$V^p(L^2_x):=\{v:I\to L^2_x:\omega_p(I,v)<\infty \},$$
which thereby becomes a B-space. Functions in $V^p(L^2_x)$ are not necessarily continuous, but one sided limits always exist. The closed
subspace of all right continuous functions in $V^p(L^2_x)$ is denoted by $V_{rc}^p(L^2_x)$. For $1\le p < q < \infty$ the embeddings
$$V^p(L^2_x) \subset V^q(L^2_x) \subset L^{\infty}_I(L^2_x)$$
are continuous. Closely related are the function spaces $U^p(L^2_x)$, where again $1\le p < \infty$. Let $P=\{t_0 < \dots < t_K\}$ be a
partition as above and $\psi_1, \dots , \psi_K \in L^2_x$. Then the step function
$$a=\sum_{k=1}^K\chi_{[t_{k-1},t_k)}\psi_k$$
is called a $U^p$-atom, if $\sum_{k=1}^K\|\psi_k\|^p_{L^2_x}=1$. One says that $u\in U^p(L^2_x)$, if there exist sequences $(\lambda_j)_{j\in\N}\in\ell^1(\N)$
and $(a_j)_{j\in\N}$ of $U^p$-atoms, so that $u=\sum_{j=1}^{\infty}\lambda_j a_j$. These functions constitute a linear space, which endowed with the norm
$$\|u\|_{U^p}:=\inf \Big\{\sum_{j=1}^{\infty}|\lambda_j|: u= \sum_{j=1}^{\infty}\lambda_j a_j\Big\}$$
becomes a B-space. If $1\le p < q < \infty$ the embeddings
$$U^p(L^2_x) \subset U^q(L^2_x) \subset L^{\infty}_I(L^2_x)$$
are continuous. $U^p$-functions are continuous from the right. These two scales of function spaces are tied by continuous embeddings.
Assume once more $1\le p < q < \infty$. Then we have
\begin{equation}\label{UVemb}
 U^p(L^2_x) \subset V^p(L^2_x) \qquad \qquad \mbox{and} \qquad \qquad V_{rc}^p(L^2_x) \subset U^q(L^2_x).
\end{equation}
Comparing with Besov-norms (of $L^2_x$-valued functions) we have the inequalities
$$\|v\|_{\dot{B}^{\frac{1}{p}}_{p,\infty}} \ls \|v\|_{V^p}\qquad \qquad \mbox{and} \qquad \qquad \|u\|_{U^p} \ls \|u\|_{\dot{B}^{\frac{1}{p}}_{p,1}}.$$
Apart from the embeddings above, the $U^p$'s and $V^p$'s are connected by duality. In fact for $1<p<\infty$ and $\frac{1}{p}+\frac{1}{p'}=1$ we can identify
\begin{equation}\label{UVdual}
 (U^p(L^2_x))' \simeq V^{p'}(L^2_x) ,
\end{equation}
where the dual pairing $B: U^p(L^2_x) \times V^{p'}(L^2_x) \to \C$ is given by a generalized Stieltjes integral
$$B(u,v)=\int_I \langle u,dv\rangle =\int_I \langle u, \frac{\partial v}{\partial t}\rangle dt,$$
the latter, if $v$ has a locally integrable weak derivative. We refer to Sections 3.5, 3.8, and 3.10 of \cite{Koch} for details on the duality
between $U^p$ and $V^{p'}$.

\quad

Now Bourgain's construction of the $X^{s,b}$-spaces is repeated. Let $\varphi: \R^n \to \R$ be a phase function and $(U_{\varphi}(t))_{t\in\R}$
the unitary group associated with the linear equation $u_t=i\varphi(-i\nabla)u$. Then one defines
$$V^p_{\varphi}=U_{\varphi}V^{p}(L^2_x)\qquad \qquad \mbox{and} \qquad \qquad U^p_{\varphi}=U_{\varphi}U^{p}(L^2_x)$$
with norms $\|v\|_{V^p_{\varphi}}=\|U_{\varphi}(-\cdot)v\|_{V^p}$ and $\|u\|_{U^p_{\varphi}}=\|U_{\varphi}(-\cdot)u\|_{U^p}$. As for Bourgain's
spaces, an immediate consequence of this definition is the equality $\|U_{\varphi}u_0\|_{V^p_{\varphi}} = \|u_0\|_{L^2_x}$, which we shall
frequently use. The duality \eqref{UVdual} gives us the estimate
$$\|\int_0^tU_{\varphi}(t-s)F(s)ds\|_{V^{p'}_{\varphi}}=\sup_{ \|w\|_{U^p_{\varphi}}\le 1}\left|\int_{I\times\R^n} F(x,t)w(x,t)dxdt \right|$$
for the solution of the inhomogeneous linear equation, cf. Lemma 3.33 in \cite{Koch}. As the $X^{s,b}$-spaces, the spaces $U^p_{\varphi}$
admit a transfer principle. A Strichartz type estimate
$$\|U_{\varphi}u_0\|_{L_t^pL_x^q} \ls \|u_0\|_{L^2_x} \qquad \qquad \mbox{implies} \qquad \qquad \|u\|_{L_t^pL_x^q} \ls \|u\|_{U^p_{\varphi}},$$
if the order of integration is reversed, we have
$$\|U_{\varphi}u_0\|_{L_x^pL_t^q} \ls \|u_0\|_{L^2_x} \qquad \qquad \mbox{implies} \qquad \qquad \|u\|_{L_x^pL_t^q} \ls \|u\|_{U^r_{\varphi}},$$
where $r=\min{(p,q)}$. The $U^p_{\varphi}$- and $U^r_{\varphi}$-norms on the right can be further estimated in $V^2_{\varphi}$, provided
$p>2$ and $r>2$, due to the continuous embedding $V^2_{rc}\subset U^{2+}$. A multilinear version of the transfer principle holds true
as well, see \cite[Proposition 2.19]{HHK}, but we will not make use of it here.

\quad

We now take $I=\R$ and specify $\varphi$ to the phase function
$$\phi: \R \times \R^{n-1} \to \R, \qquad (\xi,\eta)\mapsto \phi(\xi,\eta)=\xi(\xi^2+|\eta|^2)$$
corresponding to the linear part of ZK. For $1\le q < \infty$ we introduce
$$\|u\|_{\dot{X}^s_q}:=\Big( \sum_{N \in 2^{\Z}}N^{sq}\|P_Nu\|^q_{V^2_{\phi}}\Big)^{\frac{1}{q}},$$
which we modify for $q=\infty$ in the usual way, i. e.
$$\|u\|_{\dot{X}^s_{\infty}}:=\sup_{N \in 2^{\Z}}N^{s}\|P_Nu\|_{V^2_{\phi}}.$$
Then we define the $B$-spaces
$$\dot{X}^s_q:=\{u\in C(\R,\dot{B}^s_{2,q}):\|u\|_{\dot{X}^s_q} < \infty\},$$
if $1\le q < \infty$, and 
$$\dot{X}^s_{\infty}:=\{u\in C(\R,\dot{B}^s_{2,\infty}):\|u\|_{\dot{X}^s_{\infty}} < \infty,
\lim_{N\to\infty}N^{s}\|P_Nu\|_{V^2_{\phi}}=\lim_{N\to0}N^{s}\|P_Nu\|_{V^2_{\phi}}=0\}.$$
By the limit conditions the latter is adapted to our data space $\dot{B}^{s,o}_{2,\infty}$. We emphasize that here and below the Littlewood-Paley projections are always applied with respect to \emph{all}
space variables, which we can fix in the form
$$P_N=\F_{xy}^{-1}\chi_{\{|(\xi,\eta)|\sim N\}}\F_{xy}.$$
If in the above the real axis is replaced by a time interval $I=[0,T)$, we write $\dot{X}^s_{q,T}$ (instead of $\dot{X}^s_q$), which
are for $s=s_c$ our solution spaces. Here $T=\infty$ is admitted for the global result. In the proof, and already to make our local
statement precise, we need an auxiliary norm, which depends on $k$ and on the space dimension. It is motivated by the linear estimates
we shall use. Let $I_x$ and $I_y$ denote the Riesz potential operators of order $-1$ with respect to $x\in\R$ and $y\in\R^{n-1}$,
respectively.
\begin{itemize}
 \item For $n=2$ we define $K(I_x,I_y)^{\sigma}=\F_{xy}^{-1}|3\xi^2-\eta^2|^{\sigma}\F_{xy}$ and set
$$|P_N u|_{(k)}:= N^{s_c}\|K(I_x,I_y)^{\frac18}P_N u\|_{L^4_{xyt}}+N^{s_c} \|P_N u\|_{L^4_{xy}L^6_t}+N^{\frac12-\frac{5}{4k}}\|P_N u\|_{L^4_{xy}L^{4k}_t},$$
where $s_c=1-\frac{2}{k}$.
 \item For $n=3$ we have $s_c=\frac32-\frac{2}{k}$ and define
$$|P_N u|_{(k)}:= N^{s_c} \|I_x^{\frac{1}{10}}P_N u\|_{L^{\frac{15}{4}}_{xyt}}+N^{s_c}\|P_N u\|_{L^4_{xyt}}
+N^{\frac34-\frac{3}{2k}}\|P_N u\|_{L^4_{xy}L^{6k}_t}.$$
\end{itemize}
From these we build up the Besov type norms $\| u\|_{(k,q)}=(\sum_{N\in2^{\Z}}|P_N u|^q_{(k)})^{\frac{1}{q}}$, with the usual modification
for $q=\infty$. If the time interval is $[0,T)$ instead of $\R$, we write $|P_N u|_{(k,T)}$ and $\| u\|_{(k,q,T)}$, respectively. The linear
estimates in Sections 4.2 and 5.1 imply via the transfer principle that
$$|P_N u|_{(k)}\ls N^{s_c}\|P_Nu\|_{V^2_{\phi}} \qquad \qquad \mbox{and hence} \qquad \qquad \| u\|_{(k,q)}\ls \| u\|_{\dot{X}^{s_c}_{q}}.$$
Since all H\"older exponents in $|\cdot|_{(k)}$ are finite, this has the consequence, that for all $u \in \dot{X}^{s_c}_{q}$ we have
$\lim_{T\to0}\| u\|_{(k,q,T)}=0$. For $q=\infty$ this is due to our assumption $\lim_{N\to\infty}N^{s}\|P_Nu\|_{V^2_{\phi}}=\lim_{N\to0}N^{s}\|P_Nu\|_{V^2_{\phi}}=0$
in the definition of $\dot{X}^s_{\infty}$. Now our main result takes the following shape.
\begin{theorem}\label{main}
 Let $n\in \{2,3\}$, $k\ge 3$ an integer and $s_c=\frac{n}{2}-\frac{2}{k}$. Assume $u_0 \in \dot{B}^{s_c}_{2,q}$, if $q<\infty$, or $u_0 \in \dot{B}^{s_c,o}_{2,\infty}$. Then
\begin{enumerate}
 \item there exists a $T>0$ and a unique solution $u\in\dot{X}^{s_c}_{q,T}$ of \eqref{gZK} with $u(0)=u_0$. Moreover, there exists a constant
$C=C(k,q)>0$, so that the lifespan of solutions can be chosen uniformly equal to $T$ on the subset
$$D_T:=\{u_0: \|U_{\phi}u_0\|_{(k,q,T)}\le(4C)^{-k}\}$$
of the data space, and the map
$$S_T:D_T \to \dot{X}^{s_c}_{q,T}, \qquad \qquad u_0 \mapsto S_Tu_0:=u$$
(data upon solution) is Lipschitz continuous.
 \item there exists $\e=\e(k,q)>0$ such that, if $\|u_0\|_{\dot{B}^{s_c}_{2,q}}\le \e$, there exists a unique global solution $u\in\dot{X}^{s_c}_{q,\infty}$
of \eqref{gZK} with $u(0)=u_0$. The solution map $S_{\infty}$ is Lipschitz continuous from the ball $B_{\e}:=\{u_0:\|u_0\|_{\dot{B}^{s_c}_{2,q}}\le \e\}$
(contained in the data space) into $\dot{X}^{s_c}_{q,\infty}$.
\end{enumerate}

\end{theorem}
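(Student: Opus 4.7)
The plan is to solve the integral equation
\begin{equation*}
u(t) = U_\phi(t)u_0 - \int_0^t U_\phi(t-s)\partial_x u^{k+1}(s)\,ds
\end{equation*}
by the Banach fixed point theorem in a ball of $\dot{X}^{s_c}_{q,T}$, intersected with a ball of the auxiliary norm $\|\cdot\|_{(k,q,T)}$. The free-evolution contribution is controlled by the frequency-wise isometry $\|P_N U_\phi u_0\|_{V^2_\phi}=\|P_N u_0\|_{L^2_x}$, which immediately yields $\|U_\phi u_0\|_{\dot{X}^{s_c}_q}=\|u_0\|_{\dot{B}^{s_c}_{2,q}}$. For the Duhamel integral, the $U^2$--$V^2$ duality recalled in Section~2 reduces $N^{s_c}\|P_N(\text{Duhamel})\|_{V^2_\phi}$ to the pairing
\begin{equation*}
\sup_{\|w\|_{U^2_\phi}\le 1}\Big|\int_{[0,T)\times\R^n} P_N\partial_x(u^{k+1})\,\overline{w}\,dx\,dy\,dt\Big|,
\end{equation*}
and an $\ell^q$-summation in the output frequency $N$ reduces the whole scheme to a single multilinear estimate on the nonlinearity.

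The central multilinear estimate to establish is, schematically,
\begin{equation*}
\|\int_0^t U_\phi(t-s)\partial_x u^{k+1}(s)\,ds\|_{\dot{X}^{s_c}_{q,T}}\ls \|u\|_{(k,q,T)}^k\,\|u\|_{\dot{X}^{s_c}_{q,T}},
\end{equation*}
with $k$ factors in the weaker auxiliary norm and one factor in the full solution norm. I would prove it by Littlewood--Paley decomposing all $k+1$ factors of $u$ and the dual test function $w$, separating the output frequency $N$ from the input frequencies $N_1,\dots,N_{k+1}$ into the standard high--low and high--high regimes, moving $\partial_x$ via integration by parts onto the highest-frequency factor so that the resulting weight is absorbed by the local-smoothing component of $|\cdot|_{(k)}$ (the $K(I_x,I_y)^{1/8} L^4_{xyt}$ piece in dimension two, the $I_x^{1/10} L^{15/4}_{xyt}$ piece in dimension three), and then applying H\"older in $(x,y,t)$ to distribute the remaining factors among the Strichartz and maximal-function pieces, with $w$ placed in the same mixed norms through the transfer principle. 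The scaling exponents in $|\cdot|_{(k)}$ are chosen precisely so that the power of $N$ cancels and the $\ell^q$ frequency sums converge geometrically from the spread between the largest and smallest inputs.

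Once this multilinear estimate is in place, a standard Picard iteration on a ball whose $(k,q,T)$-radius is comparable to $\|U_\phi u_0\|_{(k,q,T)}$ produces existence, uniqueness, and Lipschitz dependence; the threshold $(4C)^{-k}$ in the statement is precisely the size that makes the map a contraction, with $C$ the implicit constant in the multilinear bound. For part (i) one exploits the continuity $\|U_\phi u_0\|_{(k,q,T)}\to 0$ as $T\to 0^+$ established in Section~2, so that every datum eventually lies in $D_T$; for part (ii) one works at $T=\infty$ from the start and uses $\|U_\phi u_0\|_{(k,q)}\ls \|u_0\|_{\dot{B}^{s_c}_{2,q}}\le \e$ to obtain the required smallness directly from the Besov size of the data.

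The hard part, as expected in a critical theory, is the multilinear estimate itself: with no derivative to spare at $s=s_c$ and $k+1$ factors to track through anisotropic mixed-norm spaces, the entire argument depends on a very tight matching of scaling and integrability exponents. A secondary difficulty, already signalled in the introduction, is hidden in the \emph{linear} maximal function estimate underlying $|\cdot|_{(k)}$: in two dimensions only a symmetrized form involving the anisotropic symbol $|3\xi^2-\eta^2|^{1/8}$ seems to be available, with the degeneracy on the lines $\eta=\pm\sqrt{3}\,\xi$ forcing a careful frequency-localized argument (this is why Section~3.1 symmetrizes the equation), whereas in three dimensions a softer substitute circumvents the obstruction. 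Any subtleties of this linear analysis feed directly into the admissible shape of the multilinear estimate, so the bulk of the technical work concentrates there.
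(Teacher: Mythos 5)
Your overall architecture matches the paper's: Littlewood--Paley decomposition, reduction via the $U^2$--$V^2$ duality to a multilinear pairing against a test function $w$ with $\|w\|_{U^2_\phi}\le 1$, H\"older among the Strichartz/maximal/smoothing norms packaged in $|\cdot|_{(k)}$ via the transfer principle, and a contraction whose smallness comes from $\|U_\phi u_0\|_{(k,q,T)}\to 0$ as $T\to 0$ rather than from the (scale-invariant, hence not small) $\dot X^{s_c}_{q,T}$ norm of the free part. The organizational difference --- your estimate with $k$ factors in the auxiliary norm and one in the full norm, versus the paper's bound by $\prod_{j=1}^{k+1}\|v_j\|_{(k,q)}$ combined with the splitting $v=U_\phi u_0+w$ --- is harmless.

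There is, however, a genuine gap in your sketch of the central multilinear estimate, at exactly the point where the derivative is paid for. You propose to integrate by parts so that $\partial_x$ lands on the highest-frequency input factor and to absorb the resulting weight $N_{k+1}$ "by the local-smoothing component of $|\cdot|_{(k)}$, the $K(I_x,I_y)^{1/8}L^4_{xyt}$ piece in dimension two, the $I_x^{1/10}L^{15/4}_{xyt}$ piece in dimension three." Those are Strichartz estimates with \emph{fractional} derivative gain: generically $|3\xi^2-\eta^2|^{1/8}\sim N^{1/4}$ and $I_x^{1/10}\sim N^{1/10}$, so they can pay for at most a quarter (resp.\ a tenth) of a derivative, while at $s=s_c$ the full derivative $N_{k+1}^{1}$ must be accounted for with nothing to spare; as written the estimate does not close. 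The paper instead leaves $\partial_x$ on the dual function and eliminates it with the sharp Kato smoothing identity, $\|\partial_x P_N w\|_{L^\infty_x L^2_{yt}}\ls\|P_Nw\|_{U^2_{\phi}}$, which gains a \emph{full} derivative; the product of the $k+1$ input factors is then placed in $L^1_xL^2_{yt}$ by H\"older, and the fractional-gain Strichartz pieces only absorb the small $\e$-powers $N^{\e}N_1^{\e}N_{k+1}^{-2\e}$ needed for the dyadic summation. This choice forces further structure you do not address: in two dimensions the symmetrized smoothing effect \eqref{Katox} controls only $I_x$ (not the full gradient), so one must distinguish the cases $|\eta_{k+1}|\ls|\xi_{k+1}|$, $|\xi|\ls|\eta|$, and the residual case where a second factor is forced to have comparably large frequency (the $N_k\sim N_{k+1}$ case, where an $N_k^{\e}$ must be traded for $N_1^{\e}$); in three dimensions a different two-case splitting based on $N_{k+1}^{\e}|\xi|\ls|\xi_{k+1}|^{\e}N$ versus $N_{k+1}^{\e}|\xi|\ls N_k^{\e}N$ plays the analogous role. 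Also, as a minor point, the symbol $|3\xi^2-\eta^2|$ appears in the $L^4_{xy}$--maximal function estimate with exponent $1/4$, and the symmetrization removes rather than creates the need for a frequency-localized analysis near the degenerate lines.
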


\section{Symmetrization and linear estimates in two space dimensions}

In \cite[Section 2.1]{GH} we observed, that \emph{in two space dimensions} the Zakharov-Kuznetsov equation can be symmetrized
by a linear change of the space variables. For that purpose we fixed $\mu:=4^{-\frac13}$, $\lambda:=\sqrt{3}\mu$ and introduced
$$R_0: \R^2 \to \R^2, \qquad (x,y)\mapsto (x',y'):=(\mu x + \lambda y, \mu x - \lambda y)$$
as well as $Rv:=v\circ R_0$. Let $u=Rv$. Then $u$ is a solution of the Zakharov-Kuznetsov equation
\begin{equation}\label{3.1}
  \partial_t u +\partial_x(\partial^2_x+\partial^2_{y})u=c_0\partial_xu^{k+1}
\end{equation}
with initial condition $u(0)=u_0$, iff $v$ solves
\begin{equation}\label{3.2}
 \partial_t v +(\partial^3_x+\partial^3_{y})v=\mu c_0(\partial_x+\partial_y)v^{k+1}\qquad \mbox{and} \qquad v(0)=R^{-1}u_0.
\end{equation}
For more details, see \cite{GH}\footnote{We also refer to the systematic treatise on linear transformations in connection with dispersive estimates
for third order equations in two space dimensions by Ben-Artzi, Koch, and Saut, in \cite{BKS}.}. The exponent $2$ in
that paper can be replaced without any other changes by $k+1$. For the study of linear estimates we may choose $c_0=0$ in \eqref{3.1}
and \eqref{3.2}. The map $R$ introduced above defines an isomorphism on any of the spaces $H^s(\R^2)$, $\dot{H}^s(\R^2)$, $B^s_{p,q}(\R^2)$,
$\dot{B}^s_{p,q}(\R^2)$ and on all mixed Lebesgue-spaces of the types $L^p_{xy}L^r_t$ and $L^p_tL^r_{xy}$. Thus especially the
well-posedness theory remains unchanged if we pass over from \eqref{3.1} to \eqref{3.2}.

\subsection{Estimates for the linear part of the symmetrized equation, the maximal function estimate} \hfill\\

For the solution of \eqref{3.2} with $c_0=0$ and initial datum $v_0$ we write $U_{\varphi_{sym}}(t)v_0$. With familiar notation we have
$U_{\varphi_{sym}}(t)=e^{-t(\partial^3_x+\partial^3_{y})}$ or, by using the Fourier transform in the space variables,
$$U_{\varphi_{sym}}(t)v_0(x,y)=\int_{\R^2}e^{i(x\xi+y\eta+t\varphi_{sym}(\xi,\eta))}\widehat{v}_0(\xi,\eta)d \xi d \eta,$$
where $\varphi_{sym}(\xi,\eta)=\xi^3+\eta^3$ is the phase function associated with the symmetrized linear problem (i. e. \eqref{3.2}
with $c_0=0$ and initial datum $v_0$). The main advantage of the symmetrization is, that it allows us to obtain the following maximal
function estimate, which is global in time and avoids any technical loss of derivatives.

\begin{prop}\label{max}
 Let $v_0 \in \dot{H}^{\frac12}(\R^2)$ and $f \in L^{\frac43}_{xy}L^1_t(\R^3)$. Then the estimates
\begin{equation}\label{3.3}
\|(I_xI_y)^{-\frac12}\int_0^tU_{\varphi_{sym}}(t-s)f(\cdot, \cdot,s)ds\|_{L^4_{xy}L_t^{\infty}} \ls \|f\|_{L^{\frac43}_{xy}L^1_t}
\end{equation}
and
\begin{equation}\label{3.4}
 \|U_{\varphi_{sym}}v_0\|_{L^4_{xy}L_t^{\infty}} \ls \|(I_xI_y)^{\frac14}v_0\|_{L^2_{xy}}
\end{equation}
hold true.
\end{prop}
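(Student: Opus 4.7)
The symmetrization renders the phase $\varphi_{sym}(\xi,\eta)=\xi^3+\eta^3$ separable, so the propagator factors as $U_{\varphi_{sym}}(t)=e^{-t\partial_x^3}\circ e^{-t\partial_y^3}$ with spatial kernel $A_t(x)A_t(y)$, where $A_t$ denotes the one-dimensional Airy kernel. I would derive both \eqref{3.3} and \eqref{3.4} from the classical one-dimensional Kenig-Ponce-Vega maximal function estimate
$$\|e^{-t\partial^3}u_0\|_{L^4_xL^\infty_t}\ls \|D^{1/4}u_0\|_{L^2_x}$$
by a $TT^*$ argument exploiting the tensor-product structure of the kernel, supplemented by the Christ-Kiselev lemma to handle retarded Duhamel integrals.

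First I would establish \eqref{3.4}. Setting $Tv_0:=U_{\varphi_{sym}}(t)(I_xI_y)^{-1/4}v_0$, estimate \eqref{3.4} amounts to $\|T\|_{L^2_{xy}\to L^4_{xy}L^\infty_t}\ls 1$, which via $\|T\|^2=\|TT^*\|$ is equivalent to
$$\Big\|(I_xI_y)^{-1/2}\int_{\R}U_{\varphi_{sym}}(t-s)G(\cdot,\cdot,s)\,ds\Big\|_{L^4_{xy}L^\infty_t}\ls \|G\|_{L^{4/3}_{xy}L^1_t}.$$
The integral kernel of this operator,
$$\bigl(I_x^{-1/2}A_{t-s}\bigr)(x-x')\bigl(I_y^{-1/2}A_{t-s}\bigr)(y-y'),$$
is a pointwise product of two one-dimensional KPV $TT^*$ kernels sharing the time argument. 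Since the Lebesgue exponents in $x$ and $y$ match on both sides ($L^{4/3}_xL^{4/3}_y$ on the source, $L^4_xL^4_y$ on the target), the strategy is to iterate the one-dimensional KPV $TT^*$ bound by applying it first in the $x$-variable (with $y$ held as a parameter) and then in the $y$-variable after commuting norms via Fubini. Globality in time of the one-dimensional estimate ensures that the coupling through the shared $t-s$ argument does not obstruct the iteration.

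Once the untruncated $TT^*$ bound (hence \eqref{3.4}) is established, \eqref{3.3}---with the retarded integral $\int_0^t$ in place of $\int_\R$---follows from the Christ-Kiselev lemma, which applies because the time-Lebesgue exponents satisfy $1<\infty$.

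The main obstacle lies in the tensor-product $TT^*$ step: since the time variable is shared between the two factors of the kernel, the two-dimensional operator is not literally a tensor product of independent one-dimensional operators, and the naive "H\"older in a tensor product" argument does not apply off the shelf. A careful frozen-variable / iterated-norm argument has to be executed. As a fall-back, one could instead bound the oscillatory two-dimensional kernel directly using the refined decay $|A_t(x)|\ls t^{-1/4}|x|^{-1/4}$ of the Airy kernel in its oscillatory regime, combined with a two-dimensional Hardy-Littlewood-Sobolev fractional-integration argument.
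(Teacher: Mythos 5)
Your primary route does not close, and the gap is exactly the one you flag yourself: the two one-dimensional $TT^*$ kernels share the time argument $t-s$, so the two-dimensional operator is not a tensor product of one-dimensional ones, and the frozen-variable iteration genuinely fails. Concretely, if you first integrate out $y'$ with $x'$ frozen, the intermediate function $g(x',s;y,t)=\int K_{t-s}(y-y')f(x',y',s)\,dy'$ already depends on the output time $t$, so it is not of the form $g(x',s)$ required by the one-dimensional inhomogeneous KPV bound, and no reordering of the mixed norms repairs this. Since you explicitly defer the "careful frozen-variable / iterated-norm argument" without executing it, the main argument is incomplete. Your fallback is the right idea --- and is essentially what the paper does --- but the kernel bound you quote is not the one that works: $|A_t(x)|\ls t^{-1/4}|x|^{-1/4}$ concerns the \emph{unweighted} Airy kernel and is not uniform in $t$; two copies give $t^{-1/2}|xy|^{-1/4}$, which after integration against $\int|f(\cdot,\cdot,s)|\,ds$ does not yield a bound uniform in $t$, so the inner $L^\infty_t$ norm cannot be taken.

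The estimate that actually closes the argument is KPV Lemma 3.6, the $t$-uniform bound on the fractionally integrated kernel, $\bigl|\int e^{i(x\xi+t\xi^3)}|\xi|^{-1/2}\,d\xi\bigr|\ls|x|^{-1/2}$, which by Fubini tensorizes to $\ls|xy|^{-1/2}$ for the kernel of $(I_xI_y)^{-1/2}U_{\varphi_{sym}}(t-s)$. The whole Duhamel term is then majorized pointwise, uniformly in $t$, by $|xy|^{-1/2}*\int|f(\cdot,\cdot,s)|\,ds$ (convolution in the space variables only), and the Hardy--Littlewood--Sobolev inequality applied in each variable gives the $L^{4/3}_{xy}\to L^4_{xy}$ bound; the sup in $t$ is free. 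The retarded integral \eqref{3.3} follows by replacing $f$ with $\chi_{[0,t]}f$ inside this absolute-value majorant --- no Christ--Kiselev lemma is needed, and invoking it here would in any case require care with the ordering of the mixed norms. Finally \eqref{3.4} is read off by $TT^*$ from the untruncated inhomogeneous estimate. If you substitute the Lemma 3.6 bound for your unweighted Airy decay, your fallback becomes precisely this argument.
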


\begin{proof}
 We use and follow the arguments in \cite[Section 3]{KPV93}. From Lemma 3.6 in that reference we know the estimate
$$\left| \int_{-\infty}^{\infty}e^{i(x\xi+t\xi^3)}\frac{d\xi}{|\xi|^{\frac12}}\right| \ls |x|^{-\frac12},$$
where the oscillatory integral should be understood as
$$\lim_{\e \to 0, \e > 0}\int_{-\infty}^{\infty}e^{i(x\xi+t\xi^3)-\e\xi^2}\frac{d\xi}{|\xi|^{\frac12}}.$$
This interpretation allows us to use Fubini's theorem for a double integral of this kind, and we obtain
$$\left| \int_{-\infty}^{\infty}\int_{-\infty}^{\infty}e^{i(x\xi+y\eta+t(\xi^3+\eta^3))}\frac{d\xi d\eta}{|\xi\eta|^{\frac12}}\right| \ls |xy|^{-\frac12}.$$
Thus we have
$$\left| \int_{-\infty}^{\infty}(I_xI_y)^{-\frac12}U_{\varphi_{sym}}(t-s)f(x,y,s)ds \right| \ls |xy|^{-\frac12}*\int_{-\infty}^{\infty}|f(x,y,s)|ds.$$
Here the convolution is done with respect to the space variables $x$ and $y$, and the bound on the right is independent of $t$. Now the Hardy-Littlewood-Sobolev
inequality is applied to obtain
\begin{equation}\label{3.5}
 \|\sup_{t \in \R}|\int_{-\infty}^{\infty}(I_xI_y)^{-\frac12}U_{\varphi_{sym}}(t-s)f(\cdot,\cdot,s)ds|\|_{L^4_{xy}} \ls \|f\|_{L^{\frac43}_{xy}L^1_t}.
\end{equation}
 Replacing $f$ by $\chi_{[0,t]}f$ in \eqref{3.5}, we obtain \eqref{3.3}, finally the $TT^*$-argument leads to \eqref{3.4}.
\end{proof}

Next we recall the Strichartz estimates with derivative gain. From \cite[Theorem 3.1]{KPV91} we obtain as a special case
\begin{equation}\label{3.6}
 \|(I_xI_y)^{\frac{1}{2p}}U_{\varphi_{sym}}v_0\|_{L_t^pL^q_{xy}}\ls \|v_0\|_{L^2_{xy}},
\end{equation}
provided $2<p\le \infty$ and $\frac{1}{p}+\frac{1}{q}=\frac{1}{2}$. As usual, the endpoint case $p=2$ is excluded here. Interpolation of
\eqref{3.4} and the $p=q=4$ - case of \eqref{3.6} leads to

\begin{kor}\label{maxStrichartz}
 Let $4 \le r \le \infty$ and $\sigma = \frac14-\frac{3}{2r}$. Then the estimate

\begin{equation}\label{3.7}
 \|U_{\varphi_{sym}}v_0\|_{L^4_{xy}L^r_t} \ls \|(I_xI_y)^{\sigma}v_0\|_{L^2_{xy}}
\end{equation}
holds true. Moreover we have for $6 \le r \le \infty$ and $s=\frac12-\frac{3}{r}$
\begin{equation}\label{3.8}
 \|U_{\varphi_{sym}}v_0\|_{L^4_{xy}L^r_t} \ls \|v_0\|_{\dot{H}^{s}_{xy}}.
\end{equation}
\end{kor}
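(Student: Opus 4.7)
\textbf{Proof proposal for Corollary \ref{maxStrichartz}.} The plan is to obtain \eqref{3.7} by complex interpolation of the two endpoint estimates \eqref{3.4} and the $p=q=4$ case of \eqref{3.6}, and then to deduce \eqref{3.8} from \eqref{3.7} by a one-line AM--GM bound on the Fourier multiplier $(I_xI_y)^\sigma$.

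First I rewrite the $p=q=4$ case of \eqref{3.6} in the form
\[
  \|U_{\varphi_{sym}}v_0\|_{L^4_{xyt}} \ls \|(I_xI_y)^{-\frac18}v_0\|_{L^2_{xy}},
\]
which is \eqref{3.7} at the endpoint $r=4$ (where $\sigma=-1/8$), while \eqref{3.4} is \eqref{3.7} at $r=\infty$ (where $\sigma=1/4$). I would then consider the analytic family
\[
  T_z v_0 := U_{\varphi_{sym}}(I_xI_y)^{-z}v_0, \qquad -\tfrac18 \le \operatorname{Re} z \le \tfrac14.
\]
For real $\sigma$ in the strip and any $t\in\R$, the correction $(I_xI_y)^{-it}$ has symbol of modulus one and hence is a unitary on $L^2_{xy}$, so the boundary estimates
\[
  \|T_{-1/8+it}v_0\|_{L^4_{xyt}} \ls \|v_0\|_{L^2_{xy}},\qquad \|T_{1/4+it}v_0\|_{L^4_{xy}L^\infty_t} \ls \|v_0\|_{L^2_{xy}}
\]
hold with constants independent of $t$. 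Stein's interpolation theorem then produces $T_\sigma : L^2_{xy}\to L^4_{xy}L^r_t$ for the intermediate exponents with $1/r = (1-\theta)/4$, and a brief computation identifies the exponent of the Riesz weight as precisely $\sigma(r) = 1/4 - 3/(2r)$. Substituting $v_0 \leftarrow (I_xI_y)^{\sigma}w_0$ yields \eqref{3.7}.

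For \eqref{3.8} the key observation is that the hypothesis $r \ge 6$ is exactly the condition $\sigma(r) \ge 0$, and that in this range $s(r) = 2\sigma(r)$. AM--GM in the form $2|\xi\eta|\le \xi^2+\eta^2$, raised to the nonnegative power $\sigma$, produces the pointwise Fourier bound
\[
  |\xi|^\sigma|\eta|^\sigma = |\xi\eta|^\sigma \ls |(\xi,\eta)|^{2\sigma} = |(\xi,\eta)|^{s},
\]
whence Plancherel gives $\|(I_xI_y)^\sigma v_0\|_{L^2_{xy}} \ls \|v_0\|_{\dot H^{s}_{xy}}$. Inserted into \eqref{3.7} this produces \eqref{3.8}.

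I do not expect any genuine obstacle. The only conceptual point worth flagging is that the restriction $r \ge 6$ in \eqref{3.8} is dictated entirely by the sign of $\sigma(r)$: for $r<6$ one has $\sigma<0$, and AM--GM then runs in the wrong direction, so the Riesz-weighted $L^2$ norm on the right of \eqref{3.7} cannot be bounded by a homogeneous Sobolev norm. The admissibility of the analytic family $T_z$ for Stein interpolation is entirely routine given the unitarity of $(I_xI_y)^{-it}$ on $L^2_{xy}$.
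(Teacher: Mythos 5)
Your proposal is correct and follows essentially the same route as the paper, which obtains \eqref{3.7} by interpolating \eqref{3.4} with the $p=q=4$ case of \eqref{3.6} (after commuting the multiplier $(I_xI_y)^{1/8}$ past $U_{\varphi_{sym}}$, exactly as you do); your exponent bookkeeping $\sigma(r)=\frac14-\frac{3}{2r}$ and the Stein-interpolation details are fine. The passage to \eqref{3.8} via $|\xi\eta|^{\sigma}\ls|(\xi,\eta)|^{2\sigma}$ for $\sigma\ge0$, with $s=2\sigma$ and $r\ge6$ exactly when $\sigma\ge0$, is also the intended argument.
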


In order to control the derivative in the nonlinearity we will use the sharp version of the local smoothing effect, which -- by the 
product structure of $U_{\varphi_{sym}}(t)=e^{-t\partial_x^3}e^{-t\partial_y^3}$ -- can be easily deduced from the one of the Airy
equation. For that purpose we fix both space variables $x$ and $y$ and recall from the proof of \cite[Theorem 3.5]{KPV93} the identity
$$ \|I_x e^{-t\partial_x^3}v_0(x,y)\|^2_{L^2_t} = c^2 \|v_0(\cdot,y)\|^2_{L^2_x}.$$
Integration with respect to $y$ gives
$$ \|I_x e^{-t\partial_x^3}v_0(x,\cdot)\|^2_{L^2_{yt}} = c \|v_0\|^2_{L^2_{xy}},$$
which combined with the unitarity of $e^{-t\partial_y^3}$ on $L^2_y$ leads to
$$ \|I_x e^{-t(\partial_x^3+\partial_y^3)}v_0(x,\cdot)\|^2_{L^2_{yt}}=\|I_x e^{-t\partial_x^3}v_0(x,\cdot)\|^2_{L^2_{yt}}= c \|v_0\|^2_{L^2_{xy}}.$$
Thus and by symmetry we have shown the identities
\begin{equation}\label{Katox}
 \|I_xU_{\varphi_{sym}}v_0\|_{L_x^{\infty}L^2_{yt}} = c \|v_0\|_{L^2_{xy}}
\end{equation}
and
\begin{equation}\label{Katoy}
 \|I_yU_{\varphi_{sym}}v_0\|_{L_y^{\infty}L^2_{xt}} = c \|v_0\|_{L^2_{xy}}.
\end{equation}
This smoothing effect is relatively weak in so far, as we get control over $I_x$ or $I_y$, respectively, but not over the full gradient
in both space variables, compare with \eqref{KatoFaminskii} below. This might be seen as the price to pay for the symmetrization.

\subsection{Linear estimates for the original ZK equation in $2$ $D$} \hfill\\

In the subsequent analysis we will apply the linear estimates of the previous section to treat the symmetrized gZK equation \eqref{3.2}.
Nonetheless it may be of interest to compare them with known linear estimates for the original ZK equation, and especially to trace back
the Strichartz- and maximal function estimates by the aid of the transformation $R$ from above. So let $\phi(\xi,\eta)=\xi(\xi^2+\eta^2)$
be the phase function and $U_{\phi}(t)$ the propagator associated with (the linear part of) equation \eqref{3.1}. \\

The local smoothing estimate
\begin{equation}\label{KatoFaminskii}
 \|IU_{\phi}u_0\|_{L_x^{\infty}L^2_{yt}}\ls \|u_0\|_{L^2_{xy}}
\end{equation}
was shown in this case by Faminskii, see \cite[Theorem 2.2]{F95}. Here $I$ denotes the Riesz potential operator of order $-1$ with
respect to $x$ \emph{and} $y$, so there is the in comparison with \eqref{Katox} stronger gain of the full gradient in
this estimate. (Since \eqref{Katox} is sharp and in view of the transformation $R$, this difference seems somewhat surprising -- at
least it was for the author. But it merely reflects the fact, that $R$ is not well-behaved as a mapping on mixed Lebesgue spaces
of type $L^p_xL^q_y$, if $p \ne q$.) \\

To convert the Strichartz- and maximal function estimates for $v$ into estimates for $u=Rv=v\circ R_0$ in terms of $u_0=v_0 \circ R_0$,
we apply the Fourier transform to the last identity and obtain $\widehat{u}_0=|\det R_0|^{-1}\widehat{v}_0 \circ (R_0^{\top})^{-1}$,
hence 
$$\widehat{v}_0(\xi',\eta')=|\det R_0|\widehat{u}_0 \circ R_0^{\top}(\xi',\eta').$$
We set $(\xi,\eta)=R_0^{\top}(\xi',\eta')$ and multiply both sides by $|\xi' \eta '|^{\sigma}= c_{\sigma}|3\xi^2 -\eta^2|^{\sigma}$. Then
$$|\xi' \eta '|^{\sigma}\widehat{v}_0(\xi',\eta')=c |3\xi^2 -\eta^2|^{\sigma}\widehat{u}_0 (\xi,\eta)$$
which can be squared and integrated with respect to $d\xi'd\eta' = |\det R_0|^{-1}d\xi d\eta$. With the Fourier multiplier
$$ K(I_x,I_y)^{\sigma}:=\F^{-1}_{xy}|3\xi^2 -\eta^2|^{\sigma}\F_{xy} $$
we then have $\|(I_xI_y)^{\sigma}v_0\|_{L^2_{xy}}=c\|K(I_x,I_y)^{\sigma}u_0\|_{L^2_{xy}}$. This gives the following Strichartz type
inequality for $u$.
\begin{equation}\label{ZKStrichartz}
  \|K(I_x,I_y)^{\frac{1}{2p}}U_{\phi}u_0\|_{L_t^pL^q_{xy}}\ls \|u_0\|_{L^2_{xy}},
\end{equation}
provided $2<p\le \infty$ and $\frac{1}{p}+\frac{1}{q}=\frac12$. If $p=q=4$, we can recognize this as the special case of (the dual
estimate to) Theorem 1.1 in \cite{CKZ13} by Carbery, Kenig, and Ziesler, which has been applied by Molinet and Pilod in their work
\cite{MP15} on the ZK equation, cf. Proposition 3.5 in that paper. In fact, our simple considerations here give a wider range of
validity with a stronger gain of derivatives, if $p \to 2$. Again the endpoint $p=2$ is excluded. \\

Similarly, we have the maximal function estimate
\begin{equation}\label{ZKmax}
 \|U_{\phi}u_0\|_{L^4_{xy}L_t^{\infty}} \ls \|K(I_x,I_y)^{\frac14}u_0\|_{L^2_{xy}} \le \|u_0\|_{\dot{H}^{\frac12}_{xy}}.
\end{equation}
A Sobolev embedding in the $y$-Variable gives
\begin{equation}\label{ZKmaxx}
\|U_{\phi}u_0\|_{L^4_{x}L_{yt}^{\infty}} \ls \|K(I_x,I_y)^{\frac14}J_y^{\frac14 +}u_0\|_{L^2_{xy}},
\end{equation}
which is comparable with Proposition 1.5  in \cite{LP09} by Linares and Pastor. see also Corollary 2.7 of \cite{LP11}. The advantage here is, that \eqref{ZKmaxx}
holds globally in time.

\section{Discussion of the  $2$ $D$ - case}

Throughout this section we have $n=2$ and hence $s_c=1-\frac{2}{k}$. $U^p_{\varphi_{sym}}$ and $V^p_{\varphi_{sym}}$ will denote the $U^p$ - and
$V^p$ - spaces associated with the phase function
$$\varphi_{sym}: \R^2 \to \R, \qquad (\xi,\eta)\mapsto \varphi_{sym}(\xi,\eta)=\xi^3+\eta^3.$$
The solution space $\dot{X}^{s_c}_q$ is that one with norm built on $V^2_{\varphi_{sym}}$.

\subsection{The central multilinear estimate} \hfill\\

We will start with a multilinear estimate on dyadic pieces of functions $v_1, \dots , v_{k+1} \in V^2_{\varphi_{sym}}$. We recall the quantity
$$|P_N v|_{(k)}:= N^{s_c}\|(I_xI_y)^{\frac18}P_N v\|_{L^4_{xyt}}+N^{s_c} \|P_N v\|_{L^4_{xy}L^6_t}+N^{\frac12-\frac{5}{4k}}\|P_N v\|_{L^4_{xy}L^{4k}_t}.$$
We remark that by the transfer principle and the linear estimates \eqref{3.7} and \eqref{3.8} all three contributions are bounded by
the $V^2_{\varphi_{sym}}$ - norm, more precisely we have
$$|P_N v|_{(k)} \ls N^{s_c}\|P_N v\|_{V^2_{\varphi_{sym}}}.$$
If the mixed norms are replaced by $L^4_{xy}L^r_T$, which means that the integration is restricted to the time interval $(0,T)$, we
write $|P_N v|_{(k,T)}$ for the corresponding composed quantity. Here we may have $T=\infty$. Observe that $\lim_{T \to 0}|P_N v|_{(k,T)}=0$,
whenever $v\in V^2_{\varphi_{sym}}$.

\begin{lemma}\label{multpieces2D}
 Let $v_1, \dots , v_{k+1} \in V^2_{\varphi_{sym}}$, $w \in U^2_{\varphi_{sym}}$ with $\|w\|_{U^2_{\varphi_{sym}}}\le 1$,
$N, N_1, \dots , N_{k+1}$ dyadic numbers with $N_1 \le N_2 \le \dots \le N_{k+1}$ and $N \ls N_{k+1}$. Then there exists 
$\e>0$ such that
$$ N^{s_c} \left| \int_{\R^3}P_{N_1}v_1 \cdot ... \cdot P_{N_{k+1}}v_{k+1} \cdot \partial_x P_Nw dxdydt \right| 
\ls  N^{\e}N_1^{\e}N_{k+1}^{-2\e} \prod_{j=1}^{k+1}|P_{N_j} v_j|_{(k)}.$$
The same holds true, if $\partial_x$ is replaced by $\partial_y$.
\end{lemma}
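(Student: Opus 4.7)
First I would absorb $\partial_x$ onto $w$ via Kato smoothing. The same one-dimensional Airy change of variables that proves~\eqref{Katox} also yields $\|\partial_x U_{\varphi_{sym}} v_0\|_{L^\infty_x L^2_{yt}} \lesssim \|v_0\|_{L^2_{xy}}$ with \emph{no} loss of derivatives, so by the $U^2_{\varphi_{sym}}$--transfer principle $\|\partial_x P_N w\|_{L^\infty_x L^2_{yt}} \lesssim \|P_N w\|_{U^2_{\varphi_{sym}}} \le 1$. Pairing against the remaining factors by Hölder in $x$ and Cauchy--Schwarz in $(y,t)$,
\[
\Big|\int_{\R^3} \prod_j P_{N_j}v_j \cdot \partial_x P_N w\,dxdydt\Big| \le \|\partial_x P_N w\|_{L^\infty_x L^2_{yt}}\,\big\|\textstyle\prod_j P_{N_j}v_j\big\|_{L^1_x L^2_{yt}},
\]
reduces matters to estimating the $L^1_x L^2_{yt}$--norm of the product.

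I would next apply factorwise Hölder in $(x,y,t)$ with exponents $\sum 1/p_j = 1$, $\sum 1/q_j = 1/2$, $\sum 1/r_j = 1/2$. Because the norms entering $|\cdot|_{(k)}$ all live in $L^4_{xy} L^r_t$, these asymmetric constraints force a Bernstein step; my scheme is to keep the two highest--frequency factors $P_{N_k}v_k,\,P_{N_{k+1}}v_{k+1}$ in $L^4_{xy}$, to Bernstein $P_{N_{k-2}}v_{k-2},\,P_{N_{k-1}}v_{k-1}$ in $y$ only into $L^4_x L^\infty_y$ at cost $N_j^{1/4}$, and to Bernstein the remaining $k-3$ lowest--frequency factors $P_{N_1}v_1,\dots,P_{N_{k-3}}v_{k-3}$ in both spatial directions into $L^\infty_{xy}$ at cost $N_j^{1/2}$. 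Bernstein is thereby paid only on the small $N_j$'s, where the losses are harmless.

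For the temporal exponents, log--convex interpolation between the three summands of $|P_{N_j}v_j|_{(k)}$ gives, for any $r_j \in [4,4k]$,
\[
\|P_{N_j}v_j\|_{L^4_{xy} L^{r_j}_t} \lesssim N_j^{-\alpha_j}\,|P_{N_j}v_j|_{(k)},
\]
with $\alpha_j$ interpolating between $s_c$ (from the first two summands, at $r_j\in\{4,6\}$) and $1/2-5/(4k)$ (from the third summand, at $r_j=4k$). I would push $r_j$ toward $4k$ on the two highest--frequency factors, exploiting the smaller prefactor $N_{k+1}^{1/2-5/(4k)}<N_{k+1}^{s_c}$ to generate the $N_{k+1}^{-2\varepsilon}$ decay, while adjusting the remaining $r_j$'s so that $\sum 1/r_j=1/2$. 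The $\partial_y$ variant follows verbatim after interchanging $x$ and $y$ and invoking~\eqref{Katoy}.

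The main obstacle lies in the final exponent bookkeeping. Assembling all powers yields a bound of the form
\[
N^{s_c}\cdot\prod_{j=1}^{k-3}N_j^{\frac12-\alpha_j}\prod_{j=k-2}^{k-1}N_j^{\frac14-\alpha_j}\prod_{j=k}^{k+1}N_j^{-\alpha_j}\prod_{j=1}^{k+1}|P_{N_j}v_j|_{(k)},
\]
and one must show, using $N\lesssim N_{k+1}$ and $N_1\le\dots\le N_{k+1}$, that the product of the $N$--powers is dominated by $N^\varepsilon N_1^\varepsilon N_{k+1}^{-2\varepsilon}$. The delicate point is that the Bernstein gains (positive powers of the low--frequency $N_j$'s) must beat the possibly negative $-\alpha_j$'s coming from the temporal interpolation, so that each $N_j$ with $j<k+1$ carries a non--negative (indeed $\varepsilon$--positive) exponent even in the extreme regime $N_1\ll N_{k+1}$. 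It is precisely here that the third summand of $|\cdot|_{(k)}$, with its smaller weight $N^{1/2-5/(4k)}$, becomes indispensable: without it the critical scaling leaves no slack to produce a $N_{k+1}^{-2\varepsilon}$ gain, and no bookkeeping of the other two Strichartz norms alone would suffice.
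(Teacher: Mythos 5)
Your opening reduction coincides with the paper's: $\partial_x$ is absorbed via \eqref{Katox} and the transfer principle into $\|\partial_x P_Nw\|_{L^\infty_xL^2_{yt}}\lesssim\|P_Nw\|_{U^2_{\varphi_{sym}}}\le 1$, leaving $N^{s_c}\|\prod_jP_{N_j}v_j\|_{L^1_xL^2_{yt}}$ to estimate, and Bernstein with an $\e$-deficit on the lowest-frequency factor is also how the paper produces $N_1^{\e}$. The genuine gap is the source of the factor $N_{k+1}^{-2\e}$. The estimate is scaling-critical, so a single uniform H\"older--Bernstein--interpolation scheme in which every factor is ultimately measured in $L^4_{xy}L^{r}_t$ cannot produce net decay in $N_{k+1}$ beyond the trivial $N^{s_c}N_{k+1}^{-s_c}$, and your proposed mechanism for it is wrong-signed: pushing $r_{k+1}$ toward $4k$ invokes the third summand of $|\cdot|_{(k)}$, whose prefactor $N^{\frac12-\frac{5}{4k}}$ is \emph{smaller} than $N^{s_c}$ (indeed $s_c-(\frac12-\frac{5}{4k})=\frac12-\frac{3}{4k}>0$), so the resulting bound $\|P_{N_{k+1}}v_{k+1}\|_{L^4_{xy}L^{4k}_t}\le N_{k+1}^{-(\frac12-\frac{5}{4k})}|P_{N_{k+1}}v_{k+1}|_{(k)}$ \emph{loses} a factor $N_{k+1}^{\frac12-\frac{3}{4k}}$ compared with using the $L^6_t$ summand. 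The $L^{4k}_t$ summand is in the norm to accommodate the large time exponents forced on the \emph{low}-frequency factors after Bernstein, not to create decay at the top frequency; used on $v_{k+1}$ it makes matters worse.

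The missing idea is a case analysis in the direction of the top frequency. The paper splits according to whether $|\eta_{k+1}|\lesssim|\xi_{k+1}|$, or $|\xi_{k+1}|\ll|\eta_{k+1}|$ with $|\xi|\lesssim|\eta|$, or $|\xi_{k+1}|\ll|\eta_{k+1}|$ with $|\eta|\lesssim|\xi|$. In the first case one writes $P_{N_{k+1}}v_{k+1}=N_{k+1}^{-\e}\bigl(N_{k+1}^{\e}|\xi_{k+1}|^{-\e}\bigr)I_x^{\e}P_{N_{k+1}}v_{k+1}$ with a bounded multiplier in parentheses, and the extra $I_x^{\e}$ (plus an $I_y^{\e}$ spent on the Sobolev embedding in $y$) is absorbed by the first summand $N^{s_c}\|(I_xI_y)^{\frac18}P_Nv\|_{L^4_{xyt}}$ of $|\cdot|_{(k)}$ --- a summand your scheme never uses and which is present in the norm precisely for this purpose. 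The second case is symmetric after trading $\partial_xP_Nw$ for $\partial_yP_Nw$. In the third case the convolution constraint on the $\eta$'s forces $N_k\sim N_{k+1}$, and the decay is harvested from $\|P_{N_k}v_k\|\lesssim N_k^{-\e}|P_{N_k}v_k|_{(k)}$ in the relevant mixed norm. Without some version of this directional trichotomy the final exponent bookkeeping --- which you correctly identify as the main obstacle but do not carry out --- cannot close, and the subsequent dyadic summation in Lemma \ref{mult2D} genuinely needs both the $N^{\e}N_{k+1}^{-\e}$ and the $N_1^{\e}N_{k+1}^{-\e}$ gains.
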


\begin{proof}
 We consider three cases, depending on the relative sizes of the spatial frequencies $(\xi_{k+1},\eta_{k+1})$ and $(\xi,\eta)$ of
$v_{k+1}$ and $w$, respectively. Observe that by our assumptions $|(\xi,\eta)| \ls |(\xi_{k+1},\eta_{k+1})|$. In the sequel, let $\e$ be
a positive number, which has to be chosen sufficiently small in dependence of $k$.\\

\begin{itemize}
 \item[Case 1:] $|\eta_{k+1}| \ls |\xi_{k+1}|$. Here we may replace the factor $P_{N_{k+1}}v_{k+1}$ by $N_{k+1}^{-\e}I_x^{\e}P_{N_{k+1}}v_{k+1}$. \\

 \item[Case 2:] $|\xi_{k+1}| \ll |\eta_{k+1}|$ and $|\xi|\ls|\eta|$. Here we may replace $(P_{N_{k+1}}v_{k+1})(\partial_x P_Nw)$
by $(N_{k+1}^{-\e}I_y^{\e}P_{N_{k+1}}v_{k+1})(\partial_y P_Nw)$ and argue as in Case 1 with the roles of $x$ and $y$ (respectively of the $\xi$'s and $\eta$'s) interchanged.\\

 \item[Case 3:] $|\xi_{k+1}| \ll |\eta_{k+1}|$ and $|\eta|\ls|\xi|$. Since in this case $|\xi| \ls N_{k+1} \sim |\eta_{k+1}|$, we have
$|\eta| \ll |\eta_{k+1}|$. By the convolution constraint $|\sum_{j=1}^{k+1}\eta_j|=|\eta|$, there exists at least one $j \in \{1, \dots , k\}$
with $|\eta_j|\sim|\eta_{k+1}|$. This implies $N_K \sim N_{k+1}$ and especially $N \ls N_k$.\\
\end{itemize}

\emph{Treatment of Case 1:} The contribution from this case is bounded by
\begin{eqnarray*}
 & \displaystyle N^{\e}N_{k+1}^{s_c-2\e}\left| \int_{\R^3}P_{N_1}v_1 \cdot ... \cdot P_{N_k}v_k \cdot (I_x^{\e} P_{N_{k+1}}v_{k+1}) \cdot \partial_x P_Nw dxdydt \right| \\
\displaystyle \ls &  N^{\e}N_{k+1}^{s_c-2\e} \|P_{N_1}v_1 \cdot ... \cdot P_{N_k}v_k \cdot (I_x^{\e} P_{N_{k+1}}v_{k+1})\|_{L_x^1L^2_{yt}}\|\partial_x P_Nw\|_{L_x^{\infty}L^2_{yt}},
\end{eqnarray*}
where by the local smoothing effect \eqref{Katox} and the transfer principle the last factor is bounded by $\|P_Nw\|_{U^2_{\varphi_{sym}}} \le 1$.
For $\e >0$ sufficiently small we choose
$$ \frac{1}{p_1}=\frac{3}{4k}, \qquad \qquad \qquad \frac{1}{q_1}=\frac{1}{4k}+\e, \qquad \qquad \qquad \frac{1}{r_1}=\frac{1}{3k}-\frac{2\e}{3}, $$
and, for $j \in \{2, \dots , k\}$,
$$ \frac{1}{p_j}=\frac{3}{4k}, \qquad \qquad \qquad \frac{1}{q_j}=\frac{1}{4k}, \qquad \qquad \qquad \frac{1}{r_j}=\frac{1}{3k}, $$
as well as
$$ \frac{1}{p_{k+1}}=\frac{1}{4}, \qquad \qquad \qquad \frac{1}{q_{k+1}}=\frac{1}{4}-\e, \qquad \qquad \qquad \frac{1}{r_{k+1}}=\frac{1}{6}+\frac{2\e}{3}. $$
Since $k \ge 3$, we have $p_j \ge 4$ and $q_j \ge 4$ for all $j \in \{1, \dots ,k+1\}$. Moreover $\displaystyle \sum_{j=1}^{k+1}\frac{1}{p_j}=1$ and $\displaystyle \sum_{j=1}^{k+1}\frac{1}{q_j}=\sum_{j=1}^{k+1}\frac{1}{r_j}=\frac12$,
so that H\"older's inequality gives the upper bound
\begin{equation}\label{ub1}
 N^{\e}N_{k+1}^{-2\e} \Big(\prod_{j=1}^k \|P_{N_j}v_j\|_{L_x^{p_j}L_y^{q_j}L_t^{r_j}}\Big)N_{k+1}^{s_c}\|I_x^{\e}P_{N_{k+1}}v_{k+1}\|_{L_x^{p_{k+1}}L_y^{q_{k+1}}L_t^{r_{k+1}}}.
\end{equation}
For the $v_1$ - factor we use Sobolev embeddings in the space variables to obtain 
$$\|P_{N_1}v_1\|_{L_x^{p_1}L_y^{q_1}L_t^{r_1}} \ls N_1^{\frac12 - \frac{1}{k} - \e}\|P_{N_1}v_1\|_{L^4_{xy}L_t^{r_1}}=N_1^{\e} N_1^{\frac12 - \frac{1}{k} -2 \e}\|P_{N_1}v_1\|_{L^4_{xy}L_t^{r_1}}.$$
We choose $\theta$ so that $\frac{1}{r_1}=\frac{1-\theta}{6}+\frac{\theta}{4k}$. Then Lyapunov's inequality gives the bound
$$N_1^{\frac12 - \frac{1}{k} -2 \e}\|P_{N_1}v_1\|_{L^4_{xy}L_t^{r_1}} \le \Big(N_1^{s_c}\|P_{N_1}v_1\|_{L^4_{xy}L_t^{6}}\Big)^{1-\theta}\Big(N_1^{\frac12-\frac{5}{4k}}\|P_{N_1}v_1\|_{L^4_{xy}L_t^{4k}}\Big)^{\theta},$$
which in turn is dominated by
$$N_1^{s_c}\|P_{N_1}v_1\|_{L^4_{xy}L_t^{6}}+N_1^{\frac12-\frac{5}{4k}}\|P_{N_1}v_1\|_{L^4_{xy}L_t^{4k}}\le |P_{N_1}v_1|_{(k)}.$$
Collecting terms we obtain 
$$\|P_{N_1}v_1\|_{L_x^{p_1}L_y^{q_1}L_t^{r_1}} \ls N_1^{\e}|P_{N_1}v_1|_{(k)},$$
and, taking $\e=0$ in this calculation for $v_1$, we as well have for $j \in \{2, \dots , k\}$ that
$$\|P_{N_j}v_j\|_{L_x^{p_j}L_y^{q_j}L_t^{r_j}} \ls |P_{N_j}v_j|_{(k)}.$$
For the last factor we use a Sobolev embedding with respect to the $y$ - variable and a convexity inequality (like Lyapunov's
inequality above) to obtain
\begin{eqnarray*}\|I_x^{\e}P_{N_{k+1}}v_{k+1}\|_{L_x^{p_{k+1}}L_y^{q_{k+1}}L_t^{r_{k+1}}}\ls \|(I_xI_y)^{\e}P_{N_{k+1}}v_{k+1}\|_{L_{xy}^{4}L_t^{r_{k+1}}}\\
\ls \|(I_xI_y)^{\frac18}P_{N_{k+1}}v_{k+1}\|_{L^4_{xyt}}+ \|P_{N_{k+1}}v_{k+1}\|_{L_{xy}^{4}L_t^{6}} \qquad \quad ,
\end{eqnarray*}
so that
$$N_{k+1}^{s_c}\|I_x^{\e}P_{N_{k+1}}v_{k+1}\|_{L_x^{p_{k+1}}L_y^{q_{k+1}}L_t^{r_{k+1}}}\ls |P_{N_{k+1}}v_{k+1}|_{(k)}.$$
Summarizing the estimates for the single factors, we see that \eqref{ub1} is in fact bounded by
$$N^{\e}N_1^{\e}N_{k+1}^{-2\e} \prod_{j=1}^{k+1}|P_{N_j} v_j|_{(k)},$$
as desired.\\

\emph{Estimation for Case 3:} Since $N_k \sim N_{k+1}$ here, the contribution is bounded by
$$N^{\e}N_k^{\e}N_{k+1}^{s_c-2\e} \|P_{N_1}v_1 \cdot ... \cdot P_{N_{k+1}}v_{k+1}\|_{L_x^1L^2_{yt}}.$$
For $j \in \{1, \dots , k-1\}$ we choose H\"older exponents $p_j$, $q_j$, and $r_j$ precisely as in Case 1. Moreover we set
$$ \frac{1}{p_k}=\frac{3}{4k}, \qquad \qquad \qquad \frac{1}{q_k}=\frac{1}{4k}-\e, \qquad \qquad \qquad \frac{1}{r_k}=\frac{1}{3k}+\frac{2\e}{3}, $$
as well as
$$ \frac{1}{p_{k+1}}=\frac{1}{4}, \qquad \qquad \qquad \frac{1}{q_{k+1}}=\frac{1}{4}, \qquad \qquad \qquad \frac{1}{r_{k+1}}=\frac{1}{6}. $$
Then again we have $p_j \ge 4$ and $q_j \ge 4$ for all $j \in \{1, \dots ,k+1\}$ and $\displaystyle \sum_{j=1}^{k+1}\frac{1}{p_j}=1$ as well as
$\displaystyle \sum_{j=1}^{k+1}\frac{1}{q_j}=\sum_{j=1}^{k+1}\frac{1}{r_j}=\frac12$. H\"older's inequality gives the only slightly different
upper bound
\begin{equation}\label{ub2}
 N^{\e}N_k^{\e}N_{k+1}^{-2\e} \Big(\prod_{j=1}^k \|P_{N_j}v_j\|_{L_x^{p_j}L_y^{q_j}L_t^{r_j}}\Big)N_{k+1}^{s_c}\|P_{N_{k+1}}v_{k+1}\|_{L_{xy}^{4}L_t^{6}}.
\end{equation}
From the estimates concerning Case 1 we already know that
\begin{equation}\label{number1}
\prod_{j=1}^{k-1} \|P_{N_j}v_j\|_{L_x^{p_j}L_y^{q_j}L_t^{r_j}} \ls N_1^{\e}\prod_{j=1}^{k-1} |P_{N_j}v_j|_{(k)}.
\end{equation}
Moreover it is clear by our choices, that
$$N_{k+1}^{s_c}\|P_{N_{k+1}}v_{k+1}\|_{L_{xy}^{4}L_t^{6}} \le |P_{N_{k+1}}v_{k+1}|_{(k)},$$
and it remains to estimate the factor for $j=k$. Sobolev embeddings in $x$ and $y$ give
$$\|P_{N_k}v_k\|_{L_x^{p_k}L_y^{q_k}L_t^{r_k}} \ls N_k^{\frac12 - \frac{1}{k} + \e}\|P_{N_k}v_k\|_{L^4_{xy}L_t^{r_k}}
=N_k^{-\e} N_k^{\frac12 - \frac{1}{k} +2 \e}\|P_{N_k}v_k\|_{L^4_{xy}L_t^{r_k}}.$$
Applying Lyapunov's inequality again we obtain (replace $\e$ by $-\e$ in the corresponding argument for $v_1$ in Case 1)
$$N_k^{\frac12 - \frac{1}{k} +2 \e}\|P_{N_k}v_k\|_{L^4_{xy}L_t^{r_k}} \ls N_k^{s_c} \|P_{N_k} v_k\|_{L^4_{xy}L^6_t}+N_k^{\frac12-\frac{5}{4k}}\|P_{N_k} v_k\|_{L^4_{xy}L^{4k}_t}\ls |P_{N_k} v_k|_{(k)},$$
so that
\begin{equation}\label{number2}
 \|P_{N_k}v_k\|_{L_x^{p_k}L_y^{q_k}L_t^{r_k}} \ls N_k^{-\e}|P_{N_k} v_k|_{(k)}.
\end{equation}
The comparison of \eqref{number1} and \eqref{number2} shows, that we have successfully exchanged the large factor $N_k^{\e}$ by the smaller $N_1^{\e}$.
Alltogether
$$\eqref{ub2} \ls N^{\e}N_1^{\e}N_{k+1}^{-2\e} \prod_{j=1}^{k+1}|P_{N_j} v_j|_{(k)}.$$
This completes the estimation in Case 3. \\

The statement about $\partial_y$ instead of $\partial_x$ is obvious by symmetry.
\end{proof}

The next step is to sum up these estimates on dyadic pieces, which will necessarily involve the auxiliary norms
$$\|v\|_{(k,q)}= \Big(\sum_{N \in 2^{\Z}} |P_N v|_{(k)}\Big)^{\frac{1}{q}}$$
with the usual modification for $q=\infty$. (We write $\|v\|_{(k,q,T)}$, if these norms are assembled from $(|P_N v|_{(k,T)})_{N \in 2^{\Z}}$.)
They remain finite for $v \in \dot{X}^{s_c}_{q}$ (or $v \in \dot{X}^{s_c}_{q,T}$, respectively). For $v_1, \dots , v_{k+1} \in \dot{X}^{s_c}_{q}$
we introduce
$$F(v_1, \dots , v_{k+1})(t):=\int_0^tU_{\varphi_{sym}}(t-s)(\partial_x)(v_1\cdot ... \cdot v_{k+1})(s)ds,$$
the dependence on $x$ and $y$ was suppressed here.

\begin{lemma}\label{mult2D}
 For $v_1, \dots , v_{k+1} \in \dot{X}^{s_c}_{q}$ we have $F(v_1, \dots , v_{k+1})\in \dot{X}^{s_c}_{q}$ and the estimate
$$\|F(v_1, \dots , v_{k+1})\|_{\dot{X}^{s_c}_{q}} \ls \prod_{j=1}^{k+1}\|v_j\|_{(k,q)}$$
holds true.
\end{lemma}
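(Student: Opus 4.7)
The plan is to combine the $U^2_{\varphi_{sym}}$–$V^2_{\varphi_{sym}}$ duality from Section~2 with the dyadic multilinear estimate of Lemma~\ref{multpieces2D} and to close the argument by summing the dyadic pieces. By the duality applied to the Duhamel representation of $F$, for each dyadic $N$,
$$N^{s_c}\|P_NF(v_1,\dots,v_{k+1})\|_{V^2_{\varphi_{sym}}}\;=\;N^{s_c}\sup_{\|w\|_{U^2_{\varphi_{sym}}}\le 1}\Bigl|\int_{\R^3}v_1\cdots v_{k+1}\,\partial_xP_Nw\,dxdydt\Bigr|.$$
Decomposing each $v_j=\sum_{N_j}P_{N_j}v_j$, relabelling so that $N_1\le\cdots\le N_{k+1}$ (at the cost of a factor $(k+1)!$) and using the Fourier-support constraint $N\ls N_{k+1}$, Lemma~\ref{multpieces2D} yields
$$a_N\;:=\;N^{s_c}\|P_NF\|_{V^2_{\varphi_{sym}}}\;\ls\sum_{\substack{N_1\le\cdots\le N_{k+1}\\ N\ls N_{k+1}}}N^{\e}N_1^{\e}N_{k+1}^{-2\e}\prod_{j=1}^{k+1}|P_{N_j}v_j|_{(k)}.$$

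What remains is to dominate $\|a_N\|_{\ell^q_N}$ by $\prod_j\|v_j\|_{(k,q)}=\prod_j\|b^{(j)}\|_{\ell^q}$, where $b^{(j)}_M:=|P_Mv_j|_{(k)}$. I would drop the ordering and split the resulting symmetric sum according to which index $j^*$ attains the maximum $N_{j^*}$. The key point is the elementary inequality
$$(N_{\min}/N_{j^*})^{\e}\;\le\;\prod_{j\ne j^*}(N_j/N_{j^*})^{\e/k}$$
(geometric mean dominates minimum), which decouples the $k$ smaller factors. Each of the resulting one-dimensional sums $\sum_{M\le N_{j^*}}(M/N_{j^*})^{\e/k}\,b^{(j)}_M$ is bounded by $C\,\|v_j\|_{(k,q)}$ uniformly in $N_{j^*}$ by H\"older against the summable dyadic weight $(M/N_{j^*})^{\e/k}$; this works for every $q\in[1,\infty]$ because $\e/k>0$.

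What is left of the sum is a one-dimensional convolution-type expression in the extremal frequency, namely $\sum_{M\gs N}N^{\e}M^{-\e}b^{(j^*)}_M$. After the substitution $N=2^n$, $M=2^m$, this is exactly the discrete convolution of $b^{(j^*)}$ with the kernel $K(n)=2^{\e n}\mathbf{1}_{n\le 0}\in\ell^1(\Z)$, so Young's inequality gives the $\ell^q_N$-bound $\ls\|v_{j^*}\|_{(k,q)}$. Summing over the $k+1$ possible choices of $j^*$ closes the estimate. That $F(v_1,\dots,v_{k+1})$ actually belongs to $\dot X^{s_c}_q$ --- continuity in $t$ with values in $\dot B^{s_c}_{2,q}$, and for $q=\infty$ the required vanishing of $N^{s_c}\|P_NF\|_{V^2_{\varphi_{sym}}}$ as $N\to 0$ and $N\to\infty$ --- follows by approximating the $v_j$ by finite-band truncations and invoking dominated convergence, using that all the H\"older exponents in $|\cdot|_{(k)}$ are finite.

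The main obstacle is the summation step: the gain $N_{k+1}^{-2\e}$ furnished by Lemma~\ref{multpieces2D} is exactly what is needed. One power has to absorb the factor $N^{\e}$ after the $N$-summation, and the second has to be spread out (via the geometric-mean splitting) so that the product over the $k$ smaller factors decouples into one-dimensional sums that are $\ell^q$-bounded uniformly in $N_{j^*}$. A cruder choice such as $N_{\min}^{\e}\le N_j^{\e}$ for a single $j$ would leave a logarithmic divergence at $q=\infty$; the equidistribution into $(N_j/N_{j^*})^{\e/k}$ is what makes the argument uniform over all $q\in[1,\infty]$.
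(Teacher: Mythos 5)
Your proposal is correct and follows essentially the same route as the paper: $U^2_{\varphi_{sym}}$--$V^2_{\varphi_{sym}}$ duality applied to the Duhamel term, Lemma~\ref{multpieces2D} on the dyadic pieces, and then dyadic summation exploiting the factors $N^{\e}N_1^{\e}N_{k+1}^{-2\e}$. The only substantive difference is organizational: the paper keeps the ordering $N_1\le\dots\le N_{k+1}$ and telescopes the geometric series iteratively (each $N_j^{\e}$-weighted sum producing an $N_{j+1}^{\e}$, until one arrives at \eqref{sumk}), then closes with a H\"older/Schur argument in $N_{k+1}$ versus $N$; you instead symmetrize, split on the maximal index, and decouple via the geometric-mean inequality before applying discrete Young. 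Both are equivalent bookkeeping, and your observation that the unordered version needs the $\e/k$-equidistribution to avoid a divergence at $q=\infty$ is accurate for your set-up (the paper's ordered telescoping sidesteps this). The one place where you are thinner than the paper is the membership statement: the paper proves continuity concretely by bounding $F(v)(t_0+h)-F(v)(t_0)$ through $F(\chi_h v_1,\dots,\chi_h v_{k+1})$ and using $\lim_{h\to0}\|\chi_h v_j\|_{(k,\infty)}=0$, and handles the $N\to0$ limit for $q=\infty$ by an explicit $\delta$-splitting of the tail in $N_{k+1}$; your ``finite-band truncation plus dominated convergence'' sketch contains the right ingredients but would need to be fleshed out to that level of detail.
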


\emph{Remark:} The statement is still correct, if $\dot{X}^{s_c}_{q}$ and $\|\cdot\|_{(k,q)}$ are replaced by $\dot{X}^{s_c}_{q,T}$ and
$\|\cdot\|_{(k,q,T)}$, respectively, and if $\partial_x$ is changed into $\partial_y$.

\begin{proof}
 By the duality between $V^2_{\varphi}$ and $U^2_{\varphi}$ (cf. \eqref{UVdual} and the subsequent remark) we have
$$\|F(v_1, \dots , v_{k+1})\|_{V^2_{\varphi_{sym}}}= \sup_{\|w\|_{U^2_{\varphi_{sym}}}\le1}\left|\int_{\R^3}v_1\cdot ... \cdot v_{k+1}\partial_xwdxdydt\right| .$$
Thus Lemma \ref{multpieces2D} tells us that for $N_1 \le \dots \le N_{k+1}$
$$N^{s_c}\|P_N F(P_{N_1}v_1, \dots ,P_{N_{k+1}} v_{k+1})\|_{V^2_{\varphi_{sym}}}\ls N^{\e}N_1^{\e}N_{k+1}^{-2\e}\prod_{j=1}^{k+1}|P_{N_j}v_j|_{(k)}.$$
(Here, by the convolution constraint $|(\xi,\eta)|=|\sum_{j=1}^{k+1}(\xi_j,\eta_j)|$ we have only contributions for $N \ls N_{k+1}$.)
We fix $N$ and $N_{k+1} \gs N$ and sum up the geometric series in $N_1 \le \dots \le N_{k}$. This gives
\begin{eqnarray}\label{sumk}
& \displaystyle \sum_{N_1 \le \dots \le N_{k}\atop N_k \le N_{k+1}}N^{s_c}\|P_N F(P_{N_1}v_1, \dots ,P_{N_{k+1}} v_{k+1})\|_{V^2_{\varphi_{sym}}} \nonumber \\ 
 \ls &  N^{\e}N_{k+1}^{-\e}\Big(\prod_{j=1}^{k}\|v_j\|_{(k,\infty)}\Big)|P_{N_{k+1}}v_{k+1}|_{(k)}.
\end{eqnarray}
Now we distinguish between $q=\infty$ and $q<\infty$.

\quad

Case 1: $q=\infty$. Here we simply sum up one last geometric series in $N_{k+1} \gs N$, which leads to
\begin{equation}\label{sumk+1}
 \sum_{N_1 \le \dots \le N_{k+1}\atop N \ls N_{k+1}}N^{s_c}\|P_N F(P_{N_1}v_1, \dots ,P_{N_{k+1}} v_{k+1})\|_{V^2_{\varphi_{sym}}} 
\ls  \prod_{j=1}^{k+1}\|v_j\|_{(k,\infty)}.
\end{equation}
Since this works for all orders of $N_1, \dots, N_{k+1}$, we have for $N$ fixed by the triangle inequality
$$N^{s_c}\|P_N F(v_1, \dots , v_{k+1})\|_{V^2_{\varphi_{sym}}}\ls \prod_{j=1}^{k+1}\|v_j\|_{(k,\infty)}.$$
Taking the supremum over all $N \in 2^{\Z}$ we have achieved the claimed inequality in the case $q=\infty$.

\quad

Going back to \eqref{sumk+1} we see that, since the sums over $N_1, \dots, N_{k+1}$ exist,
$$\lim_{N\to\infty} \sum_{N_1 \le \dots \le N_{k+1}\atop N \ls N_{k+1}}N^{s_c}\|P_N F(P_{N_1}v_1, \dots ,P_{N_{k+1}} v_{k+1})\|_{V^2_{\varphi_{sym}}}=0,$$
which implies $\lim_{N\to\infty}N^{s_c}\|P_N F(v_1, \dots , v_{k+1})\|_{V^2_{\varphi_{sym}}}=0$. To see that the limit for $N\to0$ vanishes, too,
let $\delta > 0$ be given. Then there exists $N_{\delta}\in 2^{\Z}$ such that $|P_{N_{k+1}}v_{k+1}|_{(k)}\le \delta$ for all $N_{k+1}\le N_{\delta}$.
Thus (cf. the right hand side of \eqref{sumk})
$$\sum_{N_{k+1}\gs N}\frac{N^{\e}}{N_{k+1}^{\e}}\Big(\prod_{j=1}^{k}\|v_j\|_{(k,\infty)}\Big)|P_{N_{k+1}}v_{k+1}|_{(k)}
\ls \delta \prod_{j=1}^{k}\|v_j\|_{(k,\infty)}+ \frac{N^{\e}}{N_{\delta}^{\e}}\prod_{j=1}^{k+1}\|v_j\|_{(k,\infty)}.$$
Now $N\to0$, then $\delta\to0$.

\quad

To close the discussion in the case $q=\infty$, we have to show that $F(v_1, \dots , v_{k+1})\in C(\R,\dot{B}^{s_c,o}_{2,\infty})$. For
that purpose we fix $t_0\in \R$ and denote the characteristic function of the $t$ - intervall between $t_0$ and $t_0+h$ by $\chi_h$
($h$ may be negative). Then, by the continuous embedding $\dot{X}^{s_c}_{\infty}\subset C(\R,\dot{B}^{s_c,o}_{2,\infty})$ and the estimate
already shown we have
\begin{eqnarray*}
\displaystyle & \|F(v_1, \dots , v_{k+1})(t_0+h)-F(v_1, \dots , v_{k+1})(t_0)\|_{\dot{B}^{s_c}_{2,\infty}} \\
\displaystyle \le & \sup_{t\in\R}\|F(\chi_hv_1, \dots ,\chi_h v_{k+1})(t)\|_{\dot{B}^{s_c}_{2,\infty}} \\
\displaystyle \le & \|F(\chi_hv_1, \dots ,\chi_h v_{k+1})\|_{\dot{X}^{s_c}_{\infty}} \ls \prod_{j=1}^{k+1}\|\chi_hv_j\|_{(k,\infty)},
\end{eqnarray*}
which tends to zero with $h\to 0$.

\quad

Case 2: $q<\infty$. We sum up the right hand side of \eqref{sumk} in $N_{k+1}$ using H\"older's inequality. This gives
\begin{eqnarray*}
 &  \displaystyle\sum_{N_{k+1}\gs N}\frac{N^{\e}}{N_{k+1}^{\e}}\Big(\prod_{j=1}^{k}\|v_j\|_{(k,\infty)}\Big)|P_{N_{k+1}}v_{k+1}|_{(k)} \\
\displaystyle \ls &  \displaystyle \prod_{j=1}^{k}\|v_j\|_{(k,\infty)} \Big(\sum_{N_{k+1}\gs N}N^{\frac{q\e}{2}}N_{k+1}^{-\frac{q\e}{2}}|P_{N_{k+1}}v_{k+1}|^q_{(k)} \Big)^{\frac{1}{q}}.
\end{eqnarray*}
Now we can take the $\ell^q_N(2^{\Z})$ - norm of this and sum up first in $N \ls N_{k+1}$ and then in $N_{k+1}$ to obtain
\begin{eqnarray*}
  &  \displaystyle \Big(\sum_{N\in2^{\Z}} N^{s_cq}\|\sum_{N_1\le \dots \le N_{k+1} \atop N \ls N_{k+1}}P_NF(P_{N_1}v_1, \dots , P_{N_{k+1}}v_{k+1})\|^q_{V^2_{\varphi_{sym}}}\Big)^{\frac{1}{q}} \\
\displaystyle \ls & \displaystyle \Big(\prod_{j=1}^{k}\|v_j\|_{(k,\infty)}\Big)\|v_{k+1}\|_{(k,q)} \qquad  \ls \qquad \prod_{j=1}^{k+1}\|v_j\|_{(k,q)},
\end{eqnarray*}
where in the last step the continuous embedding $\ell^q \subset \ell^{\infty}$ was used. The same bound holds for all orders of
$N_1, \dots , N_{k+1}$, hence we get the claimed inequality. The continuity follows by the same arguments as in Case 1.

\end{proof}

\subsection{Well-posedness for the symmetrized equation} \hfill \\

Here we prove the local and global well-posedness of the Cauchy problem for \eqref{3.2} with initial data $v_0$ in Besov spaces of
critical regularity. By the discussion about symmetrization at the beginning of Section 3.1 this implies the two dimensional part of
Theorem \ref{main}.

\begin{theorem}\label{mainsym}
 Let $v_0 \in \dot{B}^{s_c}_{2,q}$, if $q<\infty$, or $v_0 \in \dot{B}^{s_c,o}_{2,\infty}$. Then
\begin{enumerate}
 \item there exists a $T>0$ and a unique solution $v\in\dot{X}^{s_c}_{q,T}$ of \eqref{3.2} with $v(0)=v_0$. Moreover, there exists a constant
$C=C(k,q)>0$, so that the lifespan of solutions can be chosen uniformly equal to $T$ on the subset
$$D_T:=\{v_0: \|U_{\varphi_{sym}}v_0\|_{(k,q,T)}\le(4C)^{-k}\}$$
of the data space, and the map
$$S_T:D_T \to \dot{X}^{s_c}_{q,T}, \qquad \qquad v_0 \mapsto S_Tv_0:=v$$
(data upon solution) is Lipschitz continuous.
 \item there exists $\e=\e(k,q)>0$ such that, if $\|v_0\|_{\dot{B}^{s_c}_{2,q}}\le \e$, there exists a unique global solution $v\in\dot{X}^{s_c}_{q,\infty}$
of \eqref{3.2} with $v(0)=v_0$. The solution map $S_{\infty}$ is Lipschitz continuous from the ball $B_{\e}:=\{v_0:\|v_0\|_{\dot{B}^{s_c}_{2,q}}\le \e\}$
(contained in the data space) into $\dot{X}^{s_c}_{q,\infty}$.
\end{enumerate}

\end{theorem}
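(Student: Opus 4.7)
The Cauchy problem is equivalent, by Duhamel's formula, to the fixed point equation
\[
v = \Phi(v), \qquad \Phi(v)(t) := U_{\varphi_{sym}}(t)v_0 + \mu c_0 \int_0^t U_{\varphi_{sym}}(t-s)(\pd_x+\pd_y)v(s)^{k+1}\,ds,
\]
which I would solve in a closed ball of $\dot{X}^{s_c}_{q,T}$ cut out by the auxiliary norm $\|\cdot\|_{(k,q,T)}$, since the latter is what appears on the right hand side of the multilinear estimate in Lemma~\ref{mult2D}. Two inputs form the bedrock of the argument: the linear identity $\|U_{\varphi_{sym}}v_0\|_{\dot{X}^{s_c}_q}=\|v_0\|_{\dot{B}^{s_c}_{2,q}}$, which follows dyadically from $\|U_{\varphi_{sym}}(P_N v_0)\|_{V^2_{\varphi_{sym}}}=\|P_N v_0\|_{L^2_{xy}}$; and the fact, already noted in Section~2, that $\lim_{T\to 0}\|U_{\varphi_{sym}}v_0\|_{(k,q,T)}=0$ for any admissible $v_0$ (in the endpoint case $q=\infty$ this is precisely why the $o$-condition was imposed on the data space).

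\textbf{Fixed point.} Writing $v^{k+1}-w^{k+1}=\sum_{j=0}^{k}v^{j}w^{k-j}(v-w)$ and applying Lemma~\ref{mult2D} together with its remark (which covers the $(0,T)$-version of the estimate and the $\pd_y$ analog) yields a constant $C=C(k,q)>0$ such that
\begin{align*}
\|\Phi(v)\|_{\dot{X}^{s_c}_{q,T}} &\le \|v_0\|_{\dot{B}^{s_c}_{2,q}} + C\|v\|_{(k,q,T)}^{k+1}, \\
\|\Phi(v)-\Phi(w)\|_{\dot{X}^{s_c}_{q,T}} &\le C\bigl(\|v\|_{(k,q,T)}+\|w\|_{(k,q,T)}\bigr)^{k}\|v-w\|_{(k,q,T)}.
\end{align*}
Applying the embedding $\|\cdot\|_{(k,q,T)}\ls\|\cdot\|_{\dot{X}^{s_c}_{q,T}}$ once more, the analogous bounds hold with $\|\cdot\|_{(k,q,T)}$ on the left (the linear term then contributing $\|U_{\varphi_{sym}}v_0\|_{(k,q,T)}$ instead of $\|v_0\|_{\dot{B}^{s_c}_{2,q}}$). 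Setting $R:=2\|U_{\varphi_{sym}}v_0\|_{(k,q,T)}$ and using $v_0\in D_T$, I have $CR^{k}\le\tfrac12$, so $\Phi$ is a self-map of the set $B_R:=\{v\in\dot{X}^{s_c}_{q,T}:\|v\|_{(k,q,T)}\le R\}$ (closed in the complete norm $\|\cdot\|_{\dot{X}^{s_c}_{q,T}}$) and a strict contraction on it. Banach's theorem produces a unique $v\in B_R$; the requisite continuity in $t$ with values in $\dot{B}^{s_c}_{2,q}$ for the Duhamel term is already built into Lemma~\ref{mult2D}.

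\textbf{Lipschitz dependence, globalization, uniqueness.} For two data $v_0,\tilde v_0\in D_T$ the contraction estimate applied to the fixed points $v,\tilde v$ first gives $\|v-\tilde v\|_{(k,q,T)}\le\|U_{\varphi_{sym}}(v_0-\tilde v_0)\|_{(k,q,T)}+\tfrac12\|v-\tilde v\|_{(k,q,T)}$, whence $\|v-\tilde v\|_{(k,q,T)}\ls\|v_0-\tilde v_0\|_{\dot{B}^{s_c}_{2,q}}$; inserting this into the full multilinear bound yields the Lipschitz estimate for $S_T$ in the $\dot{X}^{s_c}_{q,T}$ norm. Part~(2) is the same fixed-point argument with $T=\infty$: choose $\e>0$ so small that $\|v_0\|_{\dot{B}^{s_c}_{2,q}}\le\e$ forces $\|U_{\varphi_{sym}}v_0\|_{(k,q,\infty)}\le(4C)^{-k}$. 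Uniqueness in the full space $\dot{X}^{s_c}_{q,T}$, rather than only in $B_R$, is upgraded by the observation that any solution $\tilde v\in\dot{X}^{s_c}_{q,T}$ satisfies $\lim_{T_0\to 0}\|\tilde v\|_{(k,q,T_0)}=0$ and hence lies in the contraction ball on some $[0,T_0]$, where it must agree with $v$; the standard continuation argument pushes this up to $[0,T)$. The main obstacle, and the reason the critical-regularity framework is so rigid, is that the lifespan cannot be controlled by the data norm itself but only by the auxiliary norm of the linear evolution, so that the smallness needed to run the contraction must come either from $T\to 0$ (local part) or from a genuine smallness assumption on the data (global part).
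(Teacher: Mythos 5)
Your proof is correct and follows essentially the same route as the paper: a contraction mapping argument powered by Lemma~\ref{mult2D}, with the smallness needed at critical regularity supplied by $\lim_{T\to0}\|U_{\varphi_{sym}}v_0\|_{(k,q,T)}=0$ (respectively by $\|U_{\varphi_{sym}}v_0\|_{(k,q,\infty)}\ls\|v_0\|_{\dot B^{s_c}_{2,q}}\le\e$ for the global part), and uniqueness upgraded to all of $\dot X^{s_c}_{q,T}$ by the standard shrinking-time argument. The only difference is cosmetic: you iterate directly for $v$ on the set $\{\|v\|_{(k,q,T)}\le R\}$ (closed in $\dot X^{s_c}_{q,T}$, with the contraction read in the $\dot X^{s_c}_{q,T}$ metric via the embedding $\|\cdot\|_{(k,q,T)}\ls\|\cdot\|_{\dot X^{s_c}_{q,T}}$), whereas the paper writes $v=\psi+w$ with $\psi=U_{\varphi_{sym}}v_0$ and contracts for the perturbation $w$ in a genuinely bounded ball --- both variants rest on the same estimates.
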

 
\begin{proof}
 For given $v_0$ we search a solution $v=\psi+w$, where $\psi=U_{\varphi_{sym}}v_0$ is a solution of the linear equation with
$\psi(0)=v_0$ and $w$ solves the integral equation $w=\Lambda_{\psi}w$ defined by
$$\Lambda_{\psi}w(t)=\int_0^tU_{\varphi_{sym}}(t-s)(\partial_x+\partial_y)(w+\psi)^{k+1}(s)ds.$$
For $j\in\{1,2\}$ let $v_0^{(j)} \in \dot{B}^{s_c}_{2,q}$ (respectively $v_0^{(j)} \in \dot{B}^{s_c,o}_{2,\infty}$) and
$\psi_j=U_{\varphi_{sym}}v_0^{(j)}$ with $\|\psi_j\|_{(k,q,T)}\le R_0$ as well as $w_j \in \dot{X}^{s_c}_{q,T}$ with
$\|w_j\|_{\dot{X}^{s_c}_{q,T}}\le R$. (The relation between $T$, $R_0$ and $R$ will be specified within the next few lines.) Then by
Lemma \ref{mult2D} and some elementary estimates we obtain
\begin{equation}\label{p1}
 \|\Lambda_{\psi_1}w_1-\Lambda_{\psi_2}w_2\|_{\dot{X}^{s_c}_{q,T}}\le C(R^k+R_0^k)(\|w_1-w_2\|_{\dot{X}^{s_c}_{q,T}}+\|\psi_1-\psi_2\|_{(k,q,T)})
\end{equation}
with a constant $C$, which may only depend on $k$ and $q$. Especially for $w_2=\psi_2=0$ we see that
\begin{equation}\label{p2}
 \|\Lambda_{\psi_1}w_1\|_{\dot{X}^{s_c}_{q,T}}\le C(R^k+R_0^k)(R+R_0),
\end{equation}
if we take $\psi_1=\psi_2$ in \eqref{p1}, we get
\begin{equation}\label{p3}
 \|\Lambda_{\psi_1}w_1-\Lambda_{\psi_1}w_2\|_{\dot{X}^{s_c}_{q,T}}\le C(R^k+R_0^k)\|w_1-w_2\|_{\dot{X}^{s_c}_{q,T}}.
\end{equation}
Now we fix $R=R_0$ in that way, that $CR^k=CR_0^k=\frac14$. Since for any $v_0\in \dot{B}^{s_c}_{2,q}$ (respectively $v_0\in \dot{B}^{s_c,o}_{2,\infty}$)
we have $\lim_{T\to0}\|U_{\phi_{sym}}v_0\|_{(k,q,T)}$, we can reach $\|\psi_j\|_{(k,q,T)}\le R_0$ by choosing $T$ small enough. With this
choice we have
$$\|\Lambda_{\psi_1}w_1\|_{\dot{X}^{s_c}_{q,T}}\le R \qquad \mbox{and} \qquad 
\|\Lambda_{\psi_1}w_1-\Lambda_{\psi_1}w_2\|_{\dot{X}^{s_c}_{q,T}}\le \frac12 \|w_1-w_2\|_{\dot{X}^{s_c}_{q,T}}.$$
Moreover, we know from Lemma \ref{mult2D} that - for $w \in\dot{X}^{s_c}_{q,T}$ - $\Lambda_{\psi_1}w \in\dot{X}^{s_c}_{q,T}$, especially
it is a continuous function with values in the data space. Thus for fixed $\psi_1$ the mapping $\Lambda_{\psi_1}$ is a contraction of
the closed ball of radius $R$ in $\dot{X}^{s_c}_{q,T}$ into itself. The contraction mapping principle provides a solution of $\Lambda_{\psi_1}w=w$,
which is unique in this ball. Since for any $w \in \dot{X}^{s_c}_{q,T}$ we have $\lim_{T\to0}\|w\|_{(k,q,T)}=0$, we can use
a standard argument, to extend the uniqueness property to the whole $\dot{X}^{s_c}_{q,T}$. The statement about the lifespan merely
reflects our choices. These also give, if inserted into \eqref{p1} the inequality
$$\|\Lambda_{\psi_1}w_1-\Lambda_{\psi_2}w_2\|_{\dot{X}^{s_c}_{q,T}}\le \frac12\|w_1-w_2\|_{\dot{X}^{s_c}_{q,T}}+\frac12\|\psi_1-\psi_2\|_{(k,q,T)},$$
which for solutions $w_1=\Lambda_{\psi_1}w_1$ and $w_2=\Lambda_{\psi_2}w_2$ implies the Lipschitz bound
$$\|w_1-w_2\|_{\dot{X}^{s_c}_{q,T}} \le \|\psi_1-\psi_2\|_{(k,q,T)} \ls \|v_0^{(1)}-v_0^{(2)}\|_{\dot{B}^{s_c}_{2,q}}.$$
Clearly, if $v_j=\psi_j + w_j$, we have the same (up to a factor) upper bound for $\|v_1-v_2\|_{\dot{X}^{s_c}_{q,T}}$. Now the local
part of the Theorem is shown. The global part is similar: One uses $\|\psi_j\|_{(k,q,\infty)}\ls\|v_0^{(j)}\|_{\dot{B}^{s_c}_{2,q}}$
and replaces $R_0$ by $\e$ in the inequalities. We omit further details.
\end{proof}

\section{Modifications in the $3$ $D$ - case}

\subsection{Linear estimates in $3$ $D$} \hfill \\

The linear part of the ZK equation in $3$ $D$ is
\begin{equation}\label{ZKlin3d}
 u_t + \partial_x\Delta u = 0,
\end{equation}
where the Laplacian can be written as $\Delta = \frac{\partial^2}{\partial x^2}+\frac{\partial^2}{\partial y_1^2}+\frac{\partial^2}{\partial y_2^2}$
in order to emphasize the symmetry in the second and third space variable. The phase function corresponding to \eqref{ZKlin3d} is
$$\phi(\xi,\eta)= \xi(\xi^2+|\eta|^2) \qquad \mbox{with} \qquad (\xi,\eta)=(\xi,\eta_1,\eta_2)\in \R^3.$$
Let $(U_{\phi}(t))_{t\in\R}$ denote the associated unitary group, so that solutions $u$ of \eqref{ZKlin3d} with initial datum $u_0$
become $u(t,x,y)=U_{\phi}(t)u_0(x,y)$. Then we can rely on various known linear estimates for such solutions. In order to control the
derivative in the multilinear estimates we may use the local smoothing effect of Kato type, i.e.
\begin{equation}\label{Kato3d}
\|IU_{\phi}u_0\|_{L_x^{\infty}L^2_{yt}}\ls \|u_0\|_{L^2_{xy}}.
\end{equation}
Here $I$ denotes the Riesz potential operator of order $-1$ with respect to all space variables. The proof of \eqref{Kato3d} follows the
same lines as in the $2$ $D$ case, the calculation is carried out by Ribaud and Vento in \cite[Proposition 3.1]{RV12}. On the other hand we have the following
Strichartz type estimates due to Linares and Saut.
\begin{lemma}\label{Strichartz3d}
 Let $\frac14 \le \frac{1}{p}<\frac27$ and $s=\frac{6}{p}-\frac32$. Then
\begin{equation}\label{StrichartzEst3d}
 \|I_x^sU_{\phi}u_0\|_{L^p_{xyt}}\ls \|u_0\|_{L^2_{xy}}.
\end{equation}
\end{lemma}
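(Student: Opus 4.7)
The plan is to follow a standard $TT^*$ argument built on a pointwise dispersive estimate for the convolution kernel $K_t^{(s)}$ of $I_x^s U_{\phi}(t)$. Formally,
$$K_t^{(s)}(x,y) = \int_{\R^3} |\xi|^s e^{i(x\xi + y\cdot\eta + t\xi(\xi^2+|\eta|^2))}\, d\xi\, d\eta.$$
Since the phase is quadratic in $\eta$, the two-dimensional $\eta$-integration can be carried out explicitly (a Schr\"odinger-type Gaussian with ``time'' parameter $t\xi$), producing a factor $(t\xi)^{-1}$ together with an oscillatory term, and leaving a one-dimensional Airy-type oscillatory integral in $\xi$ with phase $\Phi(\xi)=x\xi+t\xi^3-|y|^2/(4t\xi)$ and amplitude $|\xi|^{s-1}\operatorname{sgn}(\xi)$. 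The parabolic rescaling $\xi=t^{-1/3}u$ then reduces matters to showing that
$$\int_{\R} |u|^{s-1}\operatorname{sgn}(u)\,e^{i(Xu+u^3-Y^2/(4u))}\,du, \qquad X=xt^{-1/3}, \quad Y=|y|t^{-1/3},$$
is bounded uniformly in $X,Y$ for the admissible values of $s$. This is a van der Corput exercise using the oscillation from $u^3$ at $|u|\to\infty$ and from $-Y^2/(4u)$ near $u=0$, and yields the pointwise bound $|K_t^{(s)}(x,y)|\ls t^{-1-s/3}$.

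Next I would apply the $TT^*$ method. The desired inequality is equivalent to a bilinear estimate of the form
$$\Big|\int\!\!\int \langle I_x^{2s} U_{\phi}(t-t')F(\cdot,t'),\,G(\cdot,t)\rangle \, dt\, dt'\Big|\ls \|F\|_{L^{p'}_{xyt}}\|G\|_{L^{p'}_{xyt}}.$$
Interpolating the pointwise kernel bound with the trivial $L^2\to L^2$ boundedness of $U_{\phi}(\tau)$ gives $\|I_x^{2s}U_{\phi}(\tau)\|_{L^{p'}_{xy}\to L^p_{xy}}\ls |\tau|^{-\alpha}$ for a suitable $\alpha$, and one-dimensional Hardy--Littlewood--Sobolev in the time variable then closes the argument. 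Matching exponents reproduces precisely the scaling relation $s=\frac{6}{p}-\frac{3}{2}$, while the admissible range $\frac14\le\frac{1}{p}<\frac27$ reflects the requirement that HLS applies strictly (the upper endpoint is excluded) and includes the lower endpoint $p=4$, $s=0$, which is the free $L^4_{xyt}$ Strichartz inequality without any derivative loss.

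The main obstacle I expect is the degeneracy of the phase Hessian on the cone $3\xi^2=|\eta|^2$, which is the same degeneracy that forces the operator $K(I_x,I_y)^{\sigma}$ to appear in the two-dimensional analysis, cf.\ Proposition~\ref{max}. Performing the $\eta$-integration first absorbs that degeneracy into the factor $(t\xi)^{-1}$, but it then reintroduces a singularity at $\xi=0$ in the remaining one-dimensional integrand. To justify Fubini and the van der Corput estimate rigorously at that singularity, I would regularize the oscillatory integrals in the same spirit as in the proof of Proposition~\ref{max}, i.e.\ insert a Gaussian weight $e^{-\varepsilon\xi^2}$ and pass to the limit $\varepsilon\to 0$ only at the end. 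Once this technicality is handled, the remainder of the argument is a standard interpolation--HLS chain.
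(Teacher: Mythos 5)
The paper does not actually prove this lemma: for $p<4$ it is quoted from Linares--Saut \cite[Proposition 3.1]{LS}, and the case $p=q=4$ is only asserted to follow ``by similar arguments'' (with \cite{G14} offered as an alternative route). Your sketch is essentially a reconstruction of the Linares--Saut / Kenig--Ponce--Vega argument --- exact Fresnel integration in $\eta$, a one-dimensional oscillatory-integral bound for the reduced kernel, then $TT^*$ with Hardy--Littlewood--Sobolev in time --- so in method you are aligned with the source the paper cites rather than diverging from it, and the exponent bookkeeping you describe does reproduce $s=\frac6p-\frac32$ with an HLS exponent $\lambda=\frac2p\in(0,1)$.

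Two steps conceal real work, however. First, ``interpolating the pointwise kernel bound with the trivial $L^2\to L^2$ boundedness of $U_\phi$'' is not a Riesz--Thorin interpolation: the two endpoint operators are different ($I_x^{2s}U_\phi(\tau)$ is unbounded on $L^2$). You need Stein's interpolation for the analytic family $I_x^{z}U_\phi(\tau)$, with the $L^1\to L^\infty$ kernel bound proved on the line $\mathrm{Re}\,z=\sigma_0=2s/(1-\frac2p)\in[0,1)$ and the $L^2$ bound on $\mathrm{Re}\,z=0$, both with polynomial growth in $\mathrm{Im}\,z$; with that correction the scheme closes. Second, and more seriously, the endpoint $p=4$, $s=0$ --- the one case the paper actually leans on --- is precisely where ``a van der Corput exercise'' is an understatement: there $\sigma_0=0$ and the rescaled amplitude is $\mathrm{sgn}(u)\,|u|^{-1}=1/u$, which is not locally integrable, so uniform boundedness is an oscillatory principal-value estimate of Stein--Wainger type rather than a direct van der Corput bound. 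The $\mathrm{sgn}(u)$ factor you correctly retained from the Fresnel integral is exactly what makes this possible, but near $u=0$ one must patch together the PV cancellation (when $Y$ is small) with the oscillation coming from $-Y^2/(4u)$ (when $Y$ is large), and at the degenerate stationary points where $\Phi'=\Phi''=0$ one falls back on $\Phi'''=6+3Y^2/(2u^4)\ge 6$. Finally, your explanation for excluding $\frac1p=\frac27$ is off: HLS is unproblematic there ($\lambda=\frac47<1$); the obstruction sits in the oscillatory-integral bound as $\sigma_0\to1$. None of these gaps is fatal --- they are all handled in \cite{KPV91} and \cite{LS} --- but as written the sketch would not stand as a self-contained proof of the $p=4$ case.
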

The derivative gain here involves only the $x$ - variable, not the full gradient. For $p<4$ this estimate is the special case of
\cite[Proposition 3.1]{LS}, where $p=q$. \footnote{The regularity gain in the $p=q$ - version written down here is restricted by
$s<\frac{3}{14}$, the nonsymmetric version is stronger and exhibits a gain of up to $\frac38 -$ derivatives, see \cite{LS}. For our
purposes an $I_x^{\e}$ will do, but this $\e$ is essential in our treatment of the quartic nonlinearity.} The case $p=q=4$, which will play a major role in our considerations, can be obtained by
similar arguments. An alternative approach (allowing a bilinear refinement) was sketched in Section 2 of \cite{G14}.

\quad

A problem seems to occur, if we try to prove an appropriate maximal function estimate (global in time and even without an $\e$
unnecessary derivative loss), since the symmetrization argument we applied successfully in $2$ $D$ fails in three space dimensions.
Nonetheless, let us for a short heuristic consider the symmetric phase function
$$\tilde{\phi}(\xi,\eta_1,\eta_2)=\xi^3+\eta_1^3+\eta_2^3.$$
Then the argument in the proof of Proposition \ref{max} gives the bound
$$ \|U_{\tilde{\phi}}u_0\|_{L^4_{xy}L_t^{\infty}}\ls \|I_x^{\frac14}I_y^{\frac12}u_0\|_{L^2_{xy}},$$
which shows, what we may expect: The loss of  $\frac34$  derivatives in an $L^4_{xy}L_t^{\infty}$ - estimate. It turns out that a fairly
soft argument combined with the Strichartz type estimate \eqref{StrichartzEst3d} will give us an appropriate substitute. This works,
since we are in three dimensions and the phase function is cubic.

\begin{lemma}
 Assume $\displaystyle 0<\frac{1}{q}\le\frac{1}{p}<\frac27$ and $\displaystyle \frac{1}{q}+\frac{5}{p}\le \frac32$. Then for
$\displaystyle s=3(\frac12-\frac{1}{p}-\frac{1}{q})$ we have
\begin{equation}\label{5.5}
 \|U_{\phi}u_0\|_{L^p_{xy}L_t^{q}}\ls\|u_0\|_{\dot{H}^s_{xy}}.
\end{equation}
\end{lemma}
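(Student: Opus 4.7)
The plan is to combine Minkowski's integral inequality, a three-dimensional spatial Sobolev embedding, and an interpolation between the trivial $L^{\infty}_t L^2_{xy}$-bound and the loss-free endpoint $p_0=4$ of Lemma \ref{Strichartz3d}. The elegant point is that at $p_0=4$ the Strichartz estimate \eqref{StrichartzEst3d} requires no derivatives at all, so the entire derivative loss $s$ will come out of the Sobolev step and match the scaling automatically.

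First I would apply Minkowski, using $p\le q$:
$$\|U_{\phi}u_0\|_{L^p_{xy}L^q_t}\le\|U_{\phi}u_0\|_{L^q_tL^p_{xy}}.$$
Then I would introduce the auxiliary exponent $\tilde p$ defined by $\tfrac{1}{\tilde p}=\tfrac{1}{2}-\tfrac{1}{q}$ and set $\gamma:=3\bigl(\tfrac{1}{\tilde p}-\tfrac{1}{p}\bigr)$. Two elementary checks from the hypotheses are required: $q\ge 4$ and $\tilde p\le p$. The first because $q<4$ together with $\tfrac{1}{p}\ge\tfrac{1}{q}$ would force $\tfrac{1}{q}+\tfrac{5}{p}>\tfrac{3}{2}$, contradicting the hypothesis; the second (equivalent to $\tfrac{1}{p}+\tfrac{1}{q}\le\tfrac{1}{2}$) follows by splitting into the cases $p\le 4$, via $\tfrac{1}{p}+\tfrac{1}{q}\le\tfrac{3}{2}-\tfrac{4}{p}\le\tfrac{1}{2}$, and $p>4$, via $\tfrac{1}{p}+\tfrac{1}{q}<\tfrac{2}{p}<\tfrac{1}{2}$. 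Given these, the Hardy-Littlewood-Sobolev embedding $\dot W^{\gamma,\tilde p}_{xy}\hookrightarrow L^p_{xy}$ applied pointwise in $t$, together with the commutativity of $I^{\gamma}$ and $U_{\phi}$, yields
$$\|U_{\phi}u_0\|_{L^q_tL^p_{xy}}\ls\|U_{\phi}I^{\gamma}u_0\|_{L^q_tL^{\tilde p}_{xy}}.$$

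For the third step I would establish, for every $q\ge 4$ and $\tilde p$ with $\tfrac{1}{q}+\tfrac{1}{\tilde p}=\tfrac{1}{2}$, the mixed-norm bound $\|U_{\phi}v\|_{L^q_tL^{\tilde p}_{xy}}\ls\|v\|_{L^2_{xy}}$ by standard complex interpolation of mixed-norm Lebesgue spaces between the endpoints
$$\|U_{\phi}v\|_{L^{\infty}_tL^2_{xy}}=\|v\|_{L^2_{xy}}\qquad\text{and}\qquad\|U_{\phi}v\|_{L^4_{xyt}}\ls\|v\|_{L^2_{xy}},$$
the second being Lemma \ref{Strichartz3d} at $p_0=4$. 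Specializing to $v=I^{\gamma}u_0$ closes the chain and produces the asserted estimate with $s=\gamma=3\bigl(\tfrac{1}{\tilde p}-\tfrac{1}{p}\bigr)=3\bigl(\tfrac{1}{2}-\tfrac{1}{p}-\tfrac{1}{q}\bigr)$, in agreement with the scaling of \eqref{ZKlin3d}.

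I do not foresee any substantial obstacle: once the exponents are lined up around the loss-free endpoint $p_0=4$, the derivative count is forced by scaling, and the only real work is the routine verification above that the two hypotheses $\tfrac{1}{q}\le\tfrac{1}{p}<\tfrac{2}{7}$ and $\tfrac{1}{q}+\tfrac{5}{p}\le\tfrac{3}{2}$ are exactly what is needed to guarantee $q\ge 4$ and $\tilde p\le p$ (the upper bound $\tfrac{1}{p}<\tfrac{2}{7}$ is not invoked in the present estimate but is of course needed for the applications of \eqref{StrichartzEst3d} in the multilinear analysis to follow).
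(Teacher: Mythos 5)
Your proof is correct, but it takes a genuinely different route from the paper's. The paper works on the characteristic surface: since $\F U_{\phi}u_0$ is supported where $\tau=\xi(\xi^2+|\eta|^2)$, a Sobolev embedding in the \emph{time} variable produces a factor $|\tau|^{\frac1p-\frac1q}=|\xi|^{\frac1p-\frac1q}(\xi^2+|\eta|^2)^{\frac1p-\frac1q}$, after which the derivative-gaining Strichartz estimate \eqref{StrichartzEst3d} is applied at the exponent $p$ itself (hence the preliminary spatial Sobolev reduction to $p\le4$, so that $\tfrac14\le\tfrac1p<\tfrac27$ is in force); the hypothesis $\tfrac1q+\tfrac5p\le\tfrac32$ enters precisely to make the residual exponent $\tfrac1p-\tfrac1q-s_p$ of $|\xi|$ nonnegative, so the whole multiplier is dominated by $|(\xi,\eta)|^{s}$. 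You never touch the dispersion relation: Minkowski, a purely spatial Sobolev embedding down to the exponent $\tilde p$ with $\tfrac1q+\tfrac1{\tilde p}=\tfrac12$, and a mixed-norm Strichartz bound obtained by (Benedek--Panzone) interpolation between $L^{\infty}_tL^2_{xy}$ and the loss-free $L^4_{xyt}$ endpoint of \eqref{StrichartzEst3d}. Your two exponent verifications ($q\ge4$ and $\tfrac1p+\tfrac1q\le\tfrac12$) are exactly right and are all your argument needs, so you in fact establish \eqref{5.5} on a somewhat larger exponent region (for instance without the restriction $\tfrac1p<\tfrac27$, whose irrelevance here you correctly note, and with $\tfrac1q+\tfrac5p\le\tfrac32$ weakened to $\tfrac1p+\tfrac1q\le\tfrac12$ plus $q\ge4$). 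What the paper's computation buys is an explicit bookkeeping of where the anisotropic $|\xi|$-powers go, which makes the precise shape of its hypothesis transparent; what yours buys is economy -- only the single $L^4_{xyt}$ estimate is used, the derivative count is forced by scaling from the outset, and the statement comes out slightly more general. Both are admissible proofs of the lemma.
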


\begin{proof}
 By Sobolev embedding in the space variables we may assume $p \le 4$. Let $u=U_{\phi}u_0$. Then for the space-time Fourier transform
of $u$ we have
$$\F u (\xi,\eta, \tau)= \delta_0 (\tau - \phi(\xi,\eta))\F_{xy}u_0(\xi,\eta),$$
so that $\tau=\phi(\xi,\eta)=\xi(\xi^2+|\eta|^2)$ in the support of $\F u$. Now if $p \le q < \infty$ we can apply a Sobolev embedding
in the time variable to obtain with $\displaystyle s_p=\frac{6}{p} - \frac{3}{2}$ as in Lemma \ref{Strichartz3d}
\begin{eqnarray*}
 \displaystyle \|U_{\phi}u_0\|_{L^p_{xy}L_t^{q}}\ls & \|\F^{-1}|\tau|^{\frac{1}{p}-\frac{1}{q}}\F U_{\phi}u_0\|_{L^p_{xyt}} 
  =  \|\F^{-1}|\xi(\xi^2+|\eta|^2)|^{\frac{1}{p}-\frac{1}{q}}\F U_{\phi}u_0\|_{L^p_{xyt}} \\
 \displaystyle \ls &  \hspace{-3,2cm}\|\F^{-1}_{xy}|\xi|^{\frac{1}{p}-\frac{1}{q}-s_p}(\xi^2+|\eta|^2)^{\frac{1}{p}-\frac{1}{q}}\F_{xy}u_0\|_{L^2_{xy}},
\end{eqnarray*}
where in the last step we have applied \eqref{StrichartzEst3d}. The assumption $\displaystyle \frac{1}{q}+\frac{5}{p}\le \frac32$
implies that $\displaystyle \frac{1}{p}-\frac{1}{q}-s_p \ge 0$, so that the Fourier multiplier can be estimated by $(\xi^2 + |\eta|^2)^{\frac{s}{2}}$.
\end{proof}

\subsection{The multilinear estimate on dyadic pieces in $3$ $D$} \hfill \\

Here we prove the estimate on dyadic pieces in three dimensions, which corresponds to Lemma \ref{multpieces2D} in Section 4.1.
This will look like a copy, but there are differences. We fix $s_c=\frac32-\frac{2}{k}$ for the remaining section and recall
that for the $3$ $D$ - case we have chosen the auxiliary quantity as
$$|P_N u|_{(k)}:= N^{s_c} \|I_x^{\frac{1}{10}}P_N u\|_{L^{\frac{15}{4}}_{xyt}}+N^{s_c}\|P_N u\|_{L^4_{xyt}}
+N^{\frac34-\frac{3}{2k}}\|P_N u\|_{L^4_{xy}L^{6k}_t}.$$
By the linear estimates \eqref{StrichartzEst3d} and \eqref{5.5} the three contributions are controlled by
$$|P_N u|_{(k)} \ls N^{s_c}\|P_N u\|_{V^2_{\phi}}.$$
Again, if the time intervall is taken $(0,T)$ in the involved norms, we write $|P_N u|_{(k,T)}$ instead of $|P_N u|_{(k)}$ and then
we can rely on $\lim_{T\to0}|P_N u|_{(k)}=0$ for all $u\in V^2_{\phi}$.

\begin{lemma}\label{multpieces3D}
 Let $u_1, \dots , u_{k+1} \in V^2_{\phi}$, $w \in U^2_{\phi}$ with $\|w\|_{U^2_{\phi}}\le 1$,
$N, N_1, \dots , N_{k+1}$ dyadic numbers with $N_1 \le N_2 \le \dots \le N_{k+1}$ and $N \ls N_{k+1}$. Then there exists 
$\e>0$ such that
$$ N^{s_c} \left| \int_{\R^3}P_{N_1}u_1 \cdot ... \cdot P_{N_{k+1}}u_{k+1} \cdot \partial_x P_Nw dxdydt \right| 
\ls  N^{\e}N_1^{\e}N_{k+1}^{-2\e} \prod_{j=1}^{k+1}|P_{N_j} u_j|_{(k)}.$$
\end{lemma}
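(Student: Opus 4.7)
The plan is to mirror the architecture of Lemma \ref{multpieces2D}, exploiting two simplifications specific to the 3D setting: the local smoothing \eqref{Kato3d} controls the full Riesz potential $I$ in all spatial variables, and the lemma only concerns the derivative $\partial_x$. I therefore expect no case distinction on the relative sizes of $\xi_{k+1}$ and $\eta_{k+1}$ to be needed, in contrast to the 2D argument.

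First I apply H\"older in $L^1_{xyt}$ as $L^1_x L^2_{yt}\cdot L^\infty_x L^2_{yt}$ and place $\partial_x P_N w$ in the second factor. Because the Fourier multiplier $\partial_x I^{-1}$ is bounded on the annulus $|(\xi,\eta)|\sim N$, combining \eqref{Kato3d} with the transfer principle yields
$$\|\partial_x P_N w\|_{L^\infty_x L^2_{yt}} \ls \|IP_N w\|_{L^\infty_x L^2_{yt}} \ls \|P_N w\|_{U^2_\phi}\le 1.$$
The task thus reduces to proving
$$N^{s_c}\Big\|\prod_{j=1}^{k+1}P_{N_j}u_j\Big\|_{L^1_x L^2_{yt}}\ls N^{\e}N_1^{\e}N_{k+1}^{-2\e}\prod_{j=1}^{k+1}|P_{N_j}u_j|_{(k)}.$$

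For this I would use a second H\"older in the variables $(x,y,t)$ with exponents $(p_j,q_j,r_j)$ satisfying $\sum 1/p_j=1$ and $\sum 1/q_j=\sum 1/r_j=1/2$. My plan is to assign $u_{k+1}$ to a norm close to the Strichartz-like term $\|I_x^{1/10}P_N u\|_{L^{15/4}_{xyt}}$ with an additional weight $I_x^{-2\e}$ extracting $N_{k+1}^{-2\e}$; $u_1$ to a norm close to the maximal-function substitute $\|P_N u\|_{L^4_{xy}L^{6k}_t}$ at the cost of a small Bernstein factor $N_1^{\e}$; and the middle factors $u_2,\dots,u_k$ to intermediate mixed Lebesgue norms obtained from the three ingredients of $|\cdot|_{(k)}$ by Sobolev-Bernstein in $x$ and $y$ together with Lyapunov interpolation in the time variable. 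For each factor one derives a bound of the form
$$\|P_{N_j}u_j\|_{L^{p_j}_x L^{q_j}_y L^{r_j}_t}\ls N_j^{\alpha_j}|P_{N_j}u_j|_{(k)},$$
and the critical scaling relation $s_c=\frac{3}{2}-\frac{2}{k}$ forces the $\alpha_j$'s, together with the $N^{s_c}$ prefactor, to assemble into the required weight $N^{\e}N_1^{\e}N_{k+1}^{-2\e}$.

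The principal obstacle is precisely this bookkeeping: one must select exponents $(p_j,q_j,r_j)$ that are simultaneously (i) admissible for H\"older, (ii) reachable by Bernstein-Sobolev embeddings from the three summands of $|\cdot|_{(k)}$, and (iii) capable of producing the small positive powers on $N$, $N_1$ and the small negative power on $N_{k+1}$. Compared to 2D, Bernstein in $y$ now carries the dimensional weight $2$ since $y\in\R^2$, which makes the interplay between $x$- and $y$-Bernstein more delicate. The case $k=3$ is the tightest, whereas larger $k$ provides more slack.
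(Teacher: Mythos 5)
There is a genuine gap, and it sits exactly where you locate the ``principal obstacle.'' Your opening reduction is fine: since $|\xi|\le|(\xi,\eta)|$, the multiplier $\partial_xI^{-1}$ composed with a (fattened, smooth) Littlewood--Paley cutoff has integrable kernel, so \eqref{Kato3d} and the transfer principle give $\|\partial_xP_Nw\|_{L^\infty_xL^2_{yt}}\ls\|P_Nw\|_{U^2_{\phi}}\le1$. But this step discards the symbol $|\xi|$ of the derivative, and that symbol is the only available source of the \emph{second} factor of $N_{k+1}^{-\e}$. After your reduction you must prove $N^{s_c}\|\prod_jP_{N_j}u_j\|_{L^1_xL^2_{yt}}\ls N^{\e}N_1^{\e}N_{k+1}^{-2\e}\prod_j|P_{N_j}u_j|_{(k)}$, and this is not reachable by H\"older, Bernstein and interpolation alone: the balanced choice $\frac1{p_j}=\frac3{4k}$, $\frac1{q_j}=\frac1{4k}$ for $j\le k$ and $\frac1{p_{k+1}}=\frac1{q_{k+1}}=\frac14$ is scaling-critical and yields exactly $(N/N_{k+1})^{s_c}\prod_j|P_{N_j}u_j|_{(k)}$, with no $\e$-gain; perturbing the exponents conserves the total H\"older budget, so the Bernstein gain $N_1^{\e}$ on $u_1$ must be paid by a loss of integrability on another factor, and the best available without touching the derivative is a weight $N^{\e}N_1^{\e}N_k^{-\e}N_{k+1}^{-\e}$. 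Since $N_k\le N_{k+1}$ this is weaker than the claimed $N^{\e}N_1^{\e}N_{k+1}^{-2\e}$ and, more importantly, it does not suffice for the dyadic summation in the analogue of Lemma \ref{mult2D}: telescoping $N_1^{\e}$ through the sum over $N_1\le\dots\le N_k$ produces $N_k^{\e}$, which cancels $N_k^{-\e}$ and leaves no decay to sum over $N_k\le N_{k+1}$. Your proposed substitute --- an $I_x^{-2\e}$ weight on $u_{k+1}$ ``extracting $N_{k+1}^{-2\e}$'' --- is invalid: $P_{N_{k+1}}$ localizes the full spatial frequency $|(\xi_{k+1},\eta_{k+1})|\sim N_{k+1}$, not the $x$-frequency, so $\xi_{k+1}$ can be arbitrarily small on the support, $|\xi_{k+1}|^{-2\e}$ is not $\ls N_{k+1}^{-2\e}$ there, and $I_x^{-2\e}P_{N_{k+1}}$ is not even bounded.

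The paper's proof therefore does keep a case distinction in $3$ $D$ --- just a different one from $2$ $D$. It retains the factor $|\xi|$ coming from $\partial_x$, lets the smoothing effect absorb one full power of $N$ (the full gradient, not $|\xi|$), and splits according to whether the convolution constraint forces $|\xi|\ls|\xi_{k+1}|$ or $|\xi|\ls|\xi_j|$ for some $j\le k$. In the first case one trades $|\xi|^{\e}\ls|\xi_{k+1}|^{\e}$, i.e.\ moves $I_x^{\e}$ onto $u_{k+1}$, which is exactly what the term $N^{s_c}\|I_x^{1/10}P_Nu\|_{L^{15/4}_{xyt}}$ in $|\cdot|_{(k)}$ (via \eqref{StrichartzEst3d} and interpolation with the $L^4_{xyt}$ term) is built to absorb, producing an honest extra $N_{k+1}^{-\e}$; in the second case $|\xi|^{\e}\ls N_k^{\e}$ and this $N_k^{\e}$ is recouped from a slightly lower-integrability norm of $u_k$, so the perturbation loss is converted into $N_{k+1}^{-\e}$ rather than $N_k^{-\e}$. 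To repair your argument you must reinstate this dichotomy: the full-gradient smoothing \eqref{Kato3d} removes the need for the $2$ $D$ splitting on $|\eta_{k+1}|$ versus $|\xi_{k+1}|$, but not the need to exploit the size of $\xi$ itself.
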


\begin{proof}
 We consider two cases.
\begin{itemize}
 \item[Case 1:] $N_{k+1}^{\e}|\xi|\ls |\xi_{k+1}|^{\e}N$, \\

 \item[Case 2:] $N_{k+1}^{\e}|\xi|\ls N_k^{\e}N$.\\
\end{itemize}
In fact there is no further alternative. Clearly, we are in Case 2, if $N_{k+1}\ls N_k$. Otherwise we have $N_{k+1}\ls N$ and hence
$N_{k+1}^{\e}|\xi|^{1-\e}\ls N$. Now, since $\xi=\sum_{j=1}^{k+1}\xi_j$, we have
\begin{itemize}
 \item[(i)] $|\xi|\ls |\xi_{k+1}|$, hence $|\xi|^{\e}\ls |\xi_{k+1}|^{\e}$ and we are in Case 1, or
 \item[(ii)] $|\xi|\ls |\xi_{j}|$ for one $j \in \{1,...,k\}$, hence $|\xi|^{\e}\ls N_k^{\e}$, and we are in Case 2 again.
\end{itemize}

\quad

\emph{Estimation for Case 1:} We use $N_{k+1}^{\e}|\xi|\ls |\xi_{k+1}|^{\e}N$, $N\ls N_{k+1}$ and the Kato smoothing effect
to obtain
\begin{equation}\label{ub40}
N^{\e}N_{k+1}^{s_c-2\e} \|P_{N_1}u_1 \cdot ... \cdot P_{N_k}u_k \cdot (I_x^{\e} P_{N_{k+1}}u_{k+1})\|_{L_x^1L^2_{yt}}
\end{equation}
as upper bound for the contribution from this case. We choose H\"older exponents $p_j$ and $q_j$ with
$$ \qquad \frac{1}{p_1}=\frac{3}{4k}-\frac{\e}{6}, \qquad \qquad \frac{1}{q_1}=\frac{1}{4k}-\frac{\e}{6},\qquad$$
and, for $j\in\{2,\dots,k\}$,
$$ \qquad \frac{1}{p_j}=\frac{3}{4k}, \qquad \qquad \frac{1}{q_j}=\frac{1}{4k},\qquad$$
as well as
$$ \qquad \frac{1}{p_{k+1}}=\frac{1}{4}+\frac{\e}{6}, \qquad \qquad \frac{1}{q_{k+1}}=\frac{1}{4}+\frac{\e}{6},\qquad$$
so that $\displaystyle \sum_{j=1}^{k+1}\frac{1}{p_j}=1$ and $\displaystyle \sum_{j=1}^{k+1}\frac{1}{q_j}=\frac12$. H\"older's inequality gives
$$\|P_{N_1}u_1 \cdot ... \cdot P_{N_k}u_k \cdot (I_x^{\e} P_{N_{k+1}}u_{k+1})\|_{L_x^1L^2_{yt}} 
\le \Big(\prod_{j=1}^{k}\|P_{N_j}u_j\|_{L_x^{p_j}L_{yt}^{q_j}}\Big)\|I_x^{\e}P_{N_{k+1}}u_{k+1}\|_{L_{xyt}^{p_{k+1}}}.$$
For the first factor we use Sobolev embeddings in the space variables to obtain
$$\|P_{N_1}u_1\|_{L_x^{p_1}L_{yt}^{q_1}}=N_1^{\e}N_1^{-\e}\|P_{N_1}u_1\|_{L_x^{p_1}L_{yt}^{q_1}} \ls
N_1^{\e}N_1^{\frac34-\frac{5}{4k}-\frac{\e}{2}}\|P_{N_1}u_1\|_{L^4_{xy}L_t^{q_1}}.$$
Using a convexity inequality we can control $N_1^{\frac34-\frac{5}{4k}-\frac{\e}{2}}\|P_{N_1}u_1\|_{L^4_{xy}L_t^{q_1}}$ by the
second and third term in the auxiliary quantity $|\cdot |_{(k)}$ and we arrive at
$$\|P_{N_1}u_1\|_{L_x^{p_1}L_{yt}^{q_1}} \ls N_1^{\e}|P_{N_1}u_1 |_{(k)}.$$
In this calculation for $u_1$ we may take $\e=0$ and have for $j\in\{2,\dots,k\}$ the bound
$$\|P_{N_j}u_j\|_{L_x^{p_j}L_{yt}^{q_j}} \ls |P_{N_j}u_j |_{(k)}.$$
Finally for $u_{k+1}$ we have $\|I_x^{\e}u\|_{L_{xyt}^{p_{k+1}}}\ls \|I_x^{\frac{1}{10}} u\|_{L^{\frac{15}{4}}_{xyt}}+\| u\|_{L^4_{xyt}}$
and hence 
$$N_{k+1}^{s_c}\|I_x^{\e}P_{N_{k+1}}u_{k+1}\|_{L_{xyt}^{p_{k+1}}}\ls |P_{N_{k+1}}u_{k+1} |_{(k)}.$$
Summarizing we get
$$\eqref{ub40} \ls N^{\e}N_1^{\e}N_{k+1}^{-2\e} \prod_{j=1}^{k+1}|P_{N_j} u_j|_{(k)}.$$

\quad

\emph{Treatment of Case 2:} Here we apply $|\xi| \ls N_k^{\e}N_{k+1}^{-\e}N$, eliminate the $N$ by the application of the local
smoothing estimate and remain with the task of estimating
\begin{equation}\label{ub60}
N^{\e}N_k^{\e}N_{k+1}^{s_c-2\e} \|P_{N_1}u_1 \cdot ... \cdot  P_{N_{k+1}}u_{k+1}\|_{L_x^1L^2_{yt}}.
\end{equation}
We choose
$$ \frac{1}{p_1}=\frac{3}{4k}, \qquad \qquad \qquad \frac{1}{q_1}=\frac{1}{4k}-\frac{\e}{2}, \qquad \qquad \qquad \frac{1}{r_1}=\frac{1}{4k}, $$
for $j \in \{2, \dots , k-1\}$,
$$ \frac{1}{p_j}=\frac{3}{4k}, \qquad \qquad \qquad \frac{1}{q_j}=\frac{1}{4k}, \qquad \qquad \qquad \frac{1}{r_j}=\frac{1}{4k}, $$
as well as
$$ \frac{1}{p_k}=\frac{3}{4k}, \qquad \qquad \qquad \frac{1}{q_k}=\frac{1}{4k}+\frac{\e}{2}, \qquad \qquad \qquad \frac{1}{r_k}=\frac{1}{4k}, $$
and
$$ \frac{1}{p_{k+1}}=\frac{1}{4}, \qquad \qquad \qquad \frac{1}{q_{k+1}}=\frac{1}{4}, \qquad \qquad \qquad \frac{1}{r_{k+1}}=\frac{1}{4}. $$
H\"older's inequality gives
 $$ \|P_{N_1}u_1 \cdot ... \cdot  P_{N_{k+1}}u_{k+1})\|_{L_x^1L^2_{yt}} 
\le  \Big( \prod_{j=1}^{k}\|P_{N_j}u_j\|_{L_x^{p_j}L_{y}^{q_j}L_{t}^{r_j}}\Big)\|P_{N_{k+1}}u_{k+1}\|_{L^4_{xyt}}.$$
Sobolev inequalities in $x$ and $y$ give
$$\|P_{N_1}u_1\|_{L_x^{p_1}L_{y}^{q_1}L_{t}^{r_1}}\ls N_1^{\e}N_1^{\frac34-\frac{5}{4k}}\|P_{N_1}u_1\|_{L^4_{xy}L_t^{4k}}
 \ls N_1^{\e}|P_{N_1} u_1|_{(k)},$$
the latter by earlier calculation. Similarly we have for $j \in \{2, \dots , k-1\}$ that
$\|P_{N_j}u_j\|_{L_x^{p_j}L_{y}^{q_j}L_{t}^{r_j}}\ls |P_{N_j} u_j|_{(k)}$, and for the $k$th factor by almost the same
Sobolev embeddings
$$\|P_{N_k}u_k\|_{L_x^{p_k}L_{y}^{q_k}L_{t}^{r_k}}\ls N_k^{-\e}N_k^{\frac34-\frac{5}{4k}}\|P_{N_k}u_k\|_{L^4_{xy}L_t^{4k}}
\ls  N_k^{-\e}|P_{N_k} u_k|_{(k)}.$$
The estimate for $u_{k+1}$ is clear, since the $L^4_{xyt}$ - norm is a part of $|\cdot|_{(k)}$. Collecting terms we arrive at
$$\eqref{ub60} \ls N^{\e}N_1^{\e}N_{k+1}^{-2\e} \prod_{j=1}^{k+1}|P_{N_j} u_j|_{(k)},$$
which completes the calculation.
\end{proof}

The further procedure is now the same as for the symmetrized equation in $2$ $D$. From the quantities $|P_N u|_{(k)}$ and
$|P_N u|_{(k,T)}$, respectively, one builds the auxiliary norms $\| u\|_{(k,q)}$ and $\| u\|_{(k,q,T)}$ as norms of Besov type.
Since we avoided to use an $L_t^{\infty}$ - norm, we have $\lim_{T\to0}\| u\|_{(k,q,T)}=0$, whenever $u$ belongs to our solution
space. For $u_1, \dots , u_{k+1} \in \dot{X}^{s_c}_{q}$ one defines 
$$F(u_1, \dots , u_{k+1})(t):=\int_0^tU_{\phi}(t-s)\partial_x(u_1\cdot ... \cdot u_{k+1})(s)ds.$$
Summation of the dyadic pieces as in Lemma \ref{mult2D} gives $F(u_1, \dots , u_{k+1})\in \dot{X}^{s_c}_{q}$ and the estimate
$$\|F(u_1, \dots , u_{k+1})\|_{\dot{X}^{s_c}_{q}} \ls \prod_{j=1}^{k+1}\|u_j\|_{(k,q)},$$
which, if inserted into the proof of Theorem \ref{mainsym}, leads to the claimed local and global well-posedness result in $3$ $D$.
No further argument comes in, which is specific for the $3$ $D$ - case.

\end{document}